\newtheorem{thm}{Theorem}[section]
\newtheorem{lemma}[thm]{Lemma}
\newtheorem{prop}[thm]{Proposition}
\newtheorem{corollary}[thm]{Corollary} 
\theoremstyle{definition}
\newtheorem{defin}[thm]{Definition}
\newtheorem{rem}[thm]{Remark}
\numberwithin{equation}{section} 
\newcommand{\R}{\mathbb{R}}
\newcommand{\C}{\mathbb{C}}
\newcommand{\N}{\mathbb{N}}
\newcommand{\la}{\lambda}
\newcommand{\vp}{\varphi}
\newcommand{\beq}{\begin{equation}}
\newcommand{\eeq}{\end{equation}}
\newcommand{\La}{\mathcal{L}}
\newcommand{\eps}{\varepsilon}
\newcommand{\rp}{\text{Re}}
\newcommand{\weak}{\rightharpoonup}
\newcommand{\E}{\mathscr{E}}
\newcommand{\e}{\varepsilon}
\newcommand{\T}{\mathbb{T}}
\title{Slow motion for the 1D Swift--Hohenberg equation}
\author{Gurgen Hayrapetyan\\
	Ohio University \\
	Athens, OH, USA \\
	email: hayrapet@ohio.edu
	\and Matteo Rinaldi\\
	Carnegie Mellon University \\
	Pittsburgh, PA, USA \\
	email: matteor@andrew.cmu.edu}
\begin{document}

\maketitle
\begin{abstract}
The goal of this paper is to study the behavior of certain solutions to the Swift--Hohenberg equation on a one--dimensional torus $\T$. Combining results from $\Gamma$--convergence and ODE theory, it is shown that solutions corresponding to initial data that is $L^1$--close to a jump function $v$, remain close to $v$ for large time. This can be achieved by regarding the equation as the $L^2$--gradient flow of a second order energy functional, and obtaining asymptotic lower bounds on this energy in terms of the number of jumps of $v$.

%
\end{abstract}

\section{Introduction, Motivation and Main Results}

The fourth order partial differential equation 
\beq
\label{SHOrig}
u_t = ru - (\bar{q}^2 + \partial_{x}^2)^2 u + f(u)
\eeq
is a generalization of the Swift--Hohenberg equation introduced in 1977 by Swift and Hohenberg \cite{SwiftHohenberg} as a model for the study of pattern formation, in connection with the Rayleigh--B\'ernard convection, e.g. see \cite{CrossHohenberg},\cite{Kuwamura}. Among many different applications, the most famous ones in the literature are those in connection to pattern formation in vibrated granular materials \cite{UmbaMelo}, buckling of long elastic structures \cite{HuntPeletier}, Taylor--Couette flow \cite{SwiftHohenberg2}, \cite{PomeauManneville}, and in the study of lasers \cite{LegaMoloney}. Moreover, in recent years great attention has been paid to models of phase transitions in the study of pattern--formation in bilayer membranes, see e.g. \cite{ChermisiDMFL} where the Swift--Hohenberg equation turns out to be the gradient flow of Ginzburg--Landau type energies, with respect to the right inner product structure.

Consider \eqref{SHOrig} on a periodic domain with a characteristic size $L = 1/\eps$, where $0 < \e \ll 1$.  Letting $W$ be the primitive of $s \mapsto 2(f(s) +(r-\bar{q}^4) s)$, $q:= 2\bar{q}^2$, and rescaling time and space by $\eps$ in \eqref{SHOrig} one arrives at the rescaled form
\beq
\label{PDE}
\begin{cases}
u_t = -W'(u) - 2 \eps^2 q u_{xx} - 2 \eps^4 u_{xxxx} & x \in \T, t > 0,\\
u(x,0) = u_{0,\e} (x) & x \in \T,\\
\end{cases}
\eeq
where $\T$ is a one--dimensional torus.  We assume that $W : \R \to [0,+\infty)$ is a double--well potential with phases supported at $-1$ and $1$, and we study the long--time behavior of solutions when $q > 0$ is sufficiently small. In particular, due to the presence of the small parameter $\eps$ in  \eqref{PDE} the solutions are expected to develop interfacial structure driven by the minima of the potential $W$.
Equation \eqref{PDE} may be viewed as a gradient flow associated to a second order energy functional, and our main result consists of an asymptotic lower bound on the corresponding energy functional and the consequent bounds on the speed of evolution of the developed interfaces.  
Below we outline interfacial dynamics results for the lower order Allen--Cahn equation and its generalizations that provide much of the motivation for our analysis.


\subsection{Allen--Cahn Equation and Generalizations to Higher Order}
Equations displaying interfacial dynamics have been studied extensively in the last two decades. 
The prototypical example is the Allen--Cahn equation 
\beq \label{allencahn}
u_t = \e^2 u_{xx} - W'(u), \quad x \in I, \,  t > 0,
\eeq
(as well as its higher dimensional analog)
seen as the $L^2$--gradient flow of the energy
\beq \label{BKenergy}
G_\e(u; I):= \int_I \left( \frac{1}{\e}W(u) + \frac{\e}{2}|u_x|^2 \right) dx , \quad u \in H^1(I),
\eeq
where $I \subset \R$ is an interval. The special gradient--flow structure of \eqref{allencahn} has allowed its analysis by a wide variety of methods and techniques. 

In particular, it has been shown for the Allen--Cahn equation (see \cite{chen1992generation} and the references therein) that if $\eps \ll 1$ the evolution from a sufficiently regular initial data occurs in four main stages.  
In the first stage, the diffusion term $\e^2 u_{xx}$ can be ignored and the leading order dynamics are driven by the $\e$ independent ordinary differential equation $u_t = - W'(u)$.  This is the time-scale in which interfaces develop, i.e., regions in the space domain that separate almost constant solutions corresponding to the stable equilibria of the ordinary differential equation.  This stage, referred to as the generation of interface, has been analyzed for the Allen--Cahn equation first in \cite{FifeHsiao}, and subsequently in  \cite{chen1992generation}, \cite{chen1991dynamics}, \cite{deMottoniSchatzman}, \cite{Soner1997}, and other papers.

As the regions separating unequal equilibria decrease in length, the spacial gradient necessarily increases, and after $O(|\ln \e|)$ time the dynamics are driven by a balance between the two terms on the right--hand side of \eqref{allencahn}.  In particular, as shown in \cite{chen1992generation}, after $O(\eps^{-1})$ time the solution is exponentially close to the standing--wave profile
\beq
\Phi(x; p_1, \dots, p_n) :=  \pm \prod_n \phi\left( \frac{x-p_i}{\e}\right), 
\eeq
parametrized by the positions $p_1, \dots p_n$, where $\phi$ satisfies
\beq
\phi'' = W'(\phi), \quad \lim_{z\rightarrow \pm \infty} \phi(x) = \pm 1, \quad \phi(0) = 0.
\eeq
The zeros $p_1(t), \dots, p_n(t)$ of $\Phi$ can be viewed as specifying the location of the interfaces.  In particular, the residual $\eps^2 \Phi_{xx} - W'(\Phi)$ is exponentially small and the corresponding third stage of the evolution proceeds on an exponentially slow time scale until two zeros of the solution of \eqref{allencahn} $u^\e$ collide and disappear as part of the fourth stage of the evolution.  

The third stage, usually referred to as \emph{Slow Motion} has been studied extensively.  The most precise interface evolution results for the Allen--Cahn equation are given in  \cite{CarrPego2}, \cite{CarrPego1}, \cite{Fusco}, \cite{FuscoHale}. Specifically, the zeros of the solution $u^\e$ are approximated by $\{p_i\}$, which at leading order move according to the evolution law
\beq
\label{CPEvol}
p'_i = \eps S \left( \exp\left( -\mu \frac{p_{i+1} - p_{i}}{\e}\right) - \exp\left( -\mu \frac{p_{i} - p_{i-1}}{\e}\right) \right),
\eeq
where $\mu = \sqrt{W''(\pm 1)}$, $S > 0$ is a constant depending only on $W$.
The proof of this reduction involves invariant manifold theory and geometric analysis. 

In \cite{BronsardKohn} Bronsard and Kohn adopted a variational viewpoint to study the Allen--Cahn equation.  While their method does not recover the evolution equation above, it does provide relatively simple energy arguments to obtain a bound on the speed of this evolution.
%
In particular, Bronsard and Kohn first prove that for any $k > 0$ there exists a constant $c_k > 0$ such that, if $v \in H^1(I)$ is sufficiently close in $L^1$ norm to a step function taking values $\pm 1$ and having exactly $N$ jumps, and its energy satisfies 
\beq
\label{BKEH}
G_\e(v;I) \le N c_W + \eps^k,
\eeq
where $c_W = \int_{-1}^1 \sqrt{2 W(s)} ds$, then
\beq
\label{BKlower}
G_{\e}(v;I) \geq Nc_W - c_k \e^{k}.
\eeq
Using this energy estimate they prove that the solution $u^\e$ of \eqref{allencahn} with Dirichlet or Neumann boundary data, under the same conditions on the initial data $u_{0,\e}(x)$, satisfies
\beq \label{SlowDefinition}
\lim_{\e \to 0} \sup_{0 \leq t \leq \e^{-k}m} \int_{I} |u^\e(x,t) - v(x)| dx = 0,
\eeq 
for any $m > 0$.
The limit in \eqref{SlowDefinition} may be viewed as providing an upper bound on the speed of the evolution of the transition layers of $u^\e$. Improvements of \eqref{BKlower} have been obtained in \cite{BellettiniNayamNovaga} and \cite{Grant}.  In particular, it has recently been established in \cite{BellettiniNayamNovaga} that for a sequence $\{v_\e\} \subset H^1(\T)$ converging to a step function taking values $\pm 1$ and having exactly $N$ jumps, the Allen--Cahn functional admits the following multiple order asymptotic expansion
\[ \label{BNN}
\begin{aligned}
G(v_\eps; \T) &= N c_W - 2 \alpha_+ \kappa_+^2 \sum_{k=1}^{N} \exp \left(-\alpha_+ \frac{d_k^\eps}{\eps} \right) - 2 \alpha_- \kappa_-^2 \sum_{k=1}^{N} \exp \left(-\alpha_- \frac{d_k^\eps}{\eps}\right) \\
&+ \kappa_+^3 \beta_+ \sum_{k=1}^{N} \exp \left(\frac{-3\alpha_+}{2} \frac{d_k^\eps}{\eps}\right) + \kappa_-^3 \beta_- \sum_{k=1}^{N} \exp \left(\frac{-3\alpha_-}{2} \frac{d_k^\eps}{\eps}\right) \\
&+ o\left( \sum_{k=1}^{N} \exp \left(\frac{-3\alpha_+}{2} \frac{d_k^\eps}{\eps}\right) \right) + o\left( \sum_{k=1}^{N} \exp \left(\frac{-3\alpha_-}{2} \frac{d_k^\eps}{\eps}\right)  \right)
\end{aligned}
\]
where $\alpha_{\pm}, \kappa_{\pm}, \beta_{\pm}$ are constants dependent on the potential $W$ and $d_k^\eps$ is the distance between consecutive transition layers of $v_\eps$. 
 The gradient flow associated with the second order term in the above energy expansion gives, up to a multiplicative constant, the evolution equation \eqref{CPEvol}, providing a crucial link between the variational and geometric approaches.  Further insight into this connection can be seen as part of a general framework of $\Gamma$--convergence of gradient flows developed in \cite{SandierSerfaty}. \\

In regards to extensions to higher--order functionals, the problem has been studied in \cite{KaliesVanderVorstWanner} in connection with a family of higher order functionals of the form 
\beq
\mathcal{H}(u) := \frac{1}{\eps} \int_I \left (\sum_{k-1}^n \frac{\gamma_k \eps^{2k}}{2} | u^{(k)} |^2 + W(u) \right) dx,
\eeq
where $u^{(k)}$ stands for the $k$--th spatial derivative of $u$. Due to difficulties associated with higher order nature of the functional, in particular, the lack of exact solutions of the corresponding Euler--Lagrange equation, sharp bounds analogous to \eqref{BNN} have not been established.
An important condition on $\mathcal{H}$ in \cite{KaliesVanderVorstWanner} is
\begin{itemize}
\item Hypothesis 1: There exists constants $d_0, \eta > 0$ such that for every interval $I \subset \R$ with length $|I| \ge d_0$ and all $u \in H^n(I)$ 
\beq
\int_I \left( \sum_{k-1}^n \gamma_k | u^{(k)} |^2 \right) dx \ge \eta \int_I \left( | u^{(n)} |^2 + |u'|^2 \right) dx.
\eeq
\end{itemize}
Under this hypothesis the authors prove that for any $u \in H^n(I)$ sufficiently close to a step function  taking values $\pm 1$ and having exactly $N$ jumps,
\beq
\label{kalB}
\mathcal{H}_\e(u) \ge N m_1 - C\exp \left(-\frac{d \lambda}{3\e}\right),
\eeq
where $\lambda$ is a constant satisfying $\lambda < |{\rm Re}(\mu)|$, for all eigenvalues $\mu$ of the linearization of 
\beq
\sum_{k=1}^n (-1)^k \gamma_k u^{(2k)} + W'(u) = 0
\eeq
at $(\pm 1, 0, \dots, 0)$. \\

The initial value problem \eqref{PDE} can be seen as the $L^2$--gradient flow of the second order energy functional
\beq
\label{ourEnergy}
E_{\eps}(u; \T) := \int_\T \left( \frac{1}{\eps} W(u) - \eps q |u_x|^2 + \eps^3 |u_{xx}|^2 \right)  dx, \quad u \in H^2(\T)
\eeq
and our main goals are the extension and the improvement of the bound \eqref{kalB} for this energy and, in turn, this will allow us to prove the slow motion of solutions of \eqref{PDE}. \\

We note that  the functional \eqref{ourEnergy} does not satisfy Hypothesis 1  due to the negative term in the energy.  We use  recently established interpolation inequality (see \cite{ChermisiDMFL} and \cite{CicaleseSpadaroZeppieri}) to overcome this difficulty if $q > 0$ is sufficiently small.  Moreover, in the proof of an energy estimate analogous to \eqref{kalB}, see Theorem \ref{thm1}, we do not assume any closeness condition on the $H^2$ functions we work with, we instead make an assumptions on the zeros of such functions. 

Furthermore, inspired by \cite{BellettiniNayamNovaga}, our analysis relies on the use of a particular test function, and on the study of the solutions of the Euler--Lagrange equation associated to \eqref{ourEnergy} via hyperbolic fixed point theory, in particular through the work of Sell \cite{Sell}. Thanks to this approach we are able to  improve the exponent in \eqref{kalB} and, consequently, obtain sharper bound on the speed of evolution for solutions of \eqref{PDE}.

We recall that the $\Gamma$--convergence of the energy functional $E_\e$ has been proved in \cite{FonsecaMantegazza} for the case $q= 0$, and in \cite{ChermisiDMFL} and \cite{CicaleseSpadaroZeppieri} when $q > 0$ is small. The asymptotic behavior of $E_\e$ plays a crucial role in our analysis: we will use results from $\Gamma$--convergence, together with a careful analysis of the minimizers of the associated Euler--Lagrange equation, to study the speed of motion of solutions of \eqref{PDE}.

To conclude, we remark that the situation in the higher dimensional setting is quite different: solutions of the higher dimensional version of \eqref{allencahn} and other classical gradient flow--type equations have been studied by many different authors, see, e.g.,  \cite{AlikakosBronsardFusco}, \cite{AlikakosFusco}, \cite{BronsardKohn2}, \cite{EiYanagida}, \cite{Kowalczyk}, \cite{OttoRez}. Due to the lack of results like \eqref{BKlower}, all of them use significantly different approaches to the one introduced in \cite{BronsardKohn}. A more recent work, see \cite{MurrayRinaldi}, closes the gap by making use of a $\Gamma$--convergence result proved in \cite{LeoniMurray} and doesn't assume any specific structure of the initial data. \\

\subsection{Statement of Main Results}
\begin{thm}
\label{thm1}
Let $\T$ be the one--dimensional unit torus, and let $W$ satisfy the hypotheses \eqref{w1}--\eqref{w4}. Let $\alpha_0 > 0$.  Then there exist $q_0 > 0$ and $\eps_0 > 0$, possibly dependent on $\alpha_0$ and $q_0$, such that if $q < q_0$ and $w \in H^2(\T)$ has at least $N$ zeros, $\{x_{k}\}_{k=1}^{N}$, satisfying $\min_k |x_{k+1} - x_k| \ge \alpha_0$ then
\beq
\label{thm1res}
E_\e(w; \T) \geq N m_1 - C \sum_{k=1}^{N} \exp \left(- \frac{d_k \gamma}{\eps} \right),
\eeq
for every $0 < \eps < \eps_0$, where $d_k = x_{k+1} - x_k$,  $\gamma > 0$ is defined in \eqref{defGamma} and depends only on $W$, while $C > 0$ is independent of $\e$. 
\end{thm}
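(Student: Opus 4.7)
My approach is to pass to the blown-up variables $y=x/\eps$, decompose $\T$ into intervals centered on the zeros of $w$, and lower bound the energy on each piece by $m_1$ minus an exponential error. The decay rate is extracted from Sell's hyperbolic fixed point theory applied at the wells $\pm 1$.

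After rescaling, $E_\eps(w;\T)=\int_{\T/\eps}\bigl(W(v)-q|v_y|^2+|v_{yy}|^2\bigr)\,dy$ with $v(y)=w(\eps y)$, and I define $m_1$ as the infimum of the same integrand on $\R$ over admissible profiles $\phi\in H^2_{\rm loc}(\R)$ with $\phi(\pm\infty)=\pm 1$ and a zero. For $q<q_0$ the interpolation inequality of \cite{ChermisiDMFL,CicaleseSpadaroZeppieri} absorbs the negative $-q|v_y|^2$ term into the positive ones, yielding a coercive lower bound $c_q\int(W(v)+|v_{yy}|^2)\,dy$ on any subinterval. This gives existence of a minimizer $\phi$ of $m_1$ and uniform $L^\infty$ control on $(v,v_y)$ whenever the rescaled energy is bounded. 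Applying Sell's theorem \cite{Sell} to the Euler--Lagrange equation near the hyperbolic equilibria $(\pm 1,0,0,0)$ produces one-dimensional stable/unstable manifolds there and yields the decay $|\phi(y)\mp 1|\leq Ce^{-\gamma|y|}$ as $y\to\pm\infty$, with $\gamma$ as in \eqref{defGamma} (chosen slightly below the real parts of the linearized characteristic roots to absorb safety factors).

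Next I split $\T$ into disjoint intervals $I_k=[a_k,a_{k+1}]$, with $a_k$ the midpoint of the arc $(x_{k-1},x_k)$, so that each $I_k$ contains only the zero $x_k$ and has length $(d_{k-1}+d_k)/2\geq\alpha_0$. By additivity, \eqref{thm1res} reduces to the local estimate
\[
E_\eps(w;I_k)\;\geq\; m_1 \;-\; C\bigl(e^{-\gamma d_{k-1}/\eps}+e^{-\gamma d_k/\eps}\bigr)
\]
for each $k$. I would establish this via a dichotomy: if $E_\eps(w;I_k)\geq m_1$ the bound is immediate, while if $E_\eps(w;I_k)<m_1$ the coercive bound of Step~1 gives $L^\infty$ control on $(v,v_y)$ on $I_k/\eps$. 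A mean-value argument on the bounded integral $\int W(v)\,dy$ then provides gluing points $y_k^\pm$ close to the endpoints $a_k/\eps,a_{k+1}/\eps$ at which $(v,v_y)$ is sufficiently close to $(\pm 1,0)$. Extending $v$ beyond these points along the stable/unstable manifolds supplied by Sell's theorem produces an admissible competitor for the full-line problem whose total energy is $\geq m_1$ by definition, while the excised tails integrate to $O(e^{-\gamma d_k/\eps})$ because $|v\mp 1|^2$, $|v_y|^2$, $|v_{yy}|^2$ all decay like $e^{-2\gamma\cdot\text{dist}}$ along the manifold and the gluing points sit at distance $\sim d_k/(2\eps)$ from $x_k/\eps$.

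\textbf{Main obstacle.} The delicate step is the local estimate: extracting an exponentially (rather than polynomially) small correction under only the hypothesis of well-separated zeros, with no $L^1$-closeness assumption on $w$. The two crucial ingredients are (a) the interpolation inequality, which converts ``rescaled energy bounded'' into the pointwise control needed to locate gluing points, thereby replacing the missing closeness assumption by the energy-based dichotomy, and (b) Sell's stable-manifold parameterization, which supplies the low-energy tails along which to extend and so fixes the sharp exponent $\gamma$, improving upon the cruder cutoff construction of \cite{KaliesVanderVorstWanner} that produced \eqref{kalB}.
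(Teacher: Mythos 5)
Your outline has the right ingredients (interpolation to restore coercivity, localization near the zeros, a cut-and-extend comparison with the sharp rate coming from the linearized Euler--Lagrange operator), but it has a genuine gap at exactly the step you flag as delicate. You perform the cut-and-extend construction on $w$ itself: you locate gluing points where $(v,v_y)$ is ``sufficiently close'' to $(\pm 1,0)$ via a mean-value/measure argument, extend along the stable manifold, and claim the excised tails cost $O(e^{-\gamma d_k/\eps})$. But the energy of the extension tail is quadratic in the distance of the data $(v,v_y)$ at the gluing point from $(\pm1,0)$, and for an arbitrary $w\in H^2$ with merely bounded (even nearly minimal) energy that distance is only as small as your measure argument makes it --- a fixed $\delta$, or at best algebraically small in $\eps$ --- never exponentially small. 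Indeed $w$ need not be exponentially close to the wells \emph{anywhere} (take $w\equiv 1+\eps^{10}$ on the bulk of a subinterval: the extra energy is negligible), so no gluing-point selection applied directly to $w$ can produce an $O(e^{-\gamma d_k/\eps})$ correction; you would only recover a bound of the form $Nm_1-O(\delta^2)$. Moreover, Sell's theorem applied to the optimal heteroclinic profile for $m_1$ gives decay of \emph{that profile}, which plays no role in a lower bound for $w$. The step the paper uses and you are missing is to first bound $E_\eps(w;(x_k,x_{k+1}))$ from below by $E_\eps(\hat w_k;(x_k,x_{k+1}))$, where $\hat w_k$ is the global minimizer subject only to vanishing at the two consecutive zeros; $\hat w_k$ solves the Euler--Lagrange ODE on the \emph{whole} subinterval, so Lemmas \ref{2.2} and \ref{A2} put the trajectory $(\hat v-1,\hat v',\hat v'',\hat v''')$ in a small ball around the hyperbolic fixed point for a rescaled time of order $d_k/\eps$, and the Sell $C^1$-linearization (Lemma \ref{LemmaODEclose}) then yields the exponential estimate $\mathrm{dist}(\hat w_k(\tfrac{x_k+x_{k+1}}{2}),\{\pm1\})\le C\exp(-\gamma d_k/2\eps)$, together with the same bound on derivatives (Proposition \ref{A3}). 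Only with this input does the explicit linearized tail $\eta_k$ give a competitor whose defect is $O(e^{-\gamma d_k/\eps})$.

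Two further points. First, by centering your intervals $I_k$ at the zeros and comparing with the full-line constant $m_1$, you implicitly assume the competitor is admissible for \eqref{m1}, i.e.\ that $w$ approaches \emph{opposite} wells at the two ends of $I_k$; a zero of $w$ does not force a transition, so this can fail. The paper sidesteps this by splitting each $(x_k,x_{k+1})$ at its endpoints' zeros into half-intervals and comparing each half with $m_\pm=m_1/2$ from \eqref{mpmDef}, where the only constraints are $u(0)=0$ and convergence to either well, the two values coinciding by the symmetry of $W$. Second, minor inaccuracies: with eigenvalues $\pm\gamma\pm i\delta$ the stable manifold at each equilibrium in $\R^4$ is two-dimensional, not one-dimensional, and the theorem asserts the exact rate $\gamma$ of \eqref{defGamma}, not a rate ``slightly below'' the spectral gap; the paper attains the exact rate by taking $\lambda=\gamma$ in Lemma \ref{LemmaODEclose} and by using a tail that solves the linearized equation exactly.
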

We remark that a similar estimate can be obtained when the domain is an interval $I := (a_0, b_0)$, with \eqref{thm1res} replaced with
\beq
E_\e(w; I) \geq N m_1 - C \sum_{k=0}^{N} \exp \left(- \frac{d_k \gamma}{\eps} \right),
\eeq
where $d_0 := x_1 - a_0, \, d_{N} := b_0 - x_N$. 

\begin{rem}
We highlight the fact that we are {\it not} requiring the function $w$ of Theorem \ref{thm1} to be $L^1$--close to a jump function, in contrast with \cite{BellettiniNayamNovaga}, \cite{BronsardKohn}, \cite{Grant}, \cite{KaliesVanderVorstWanner}. On the other hand, it is easy to show that if $w$ is $L^1$--close to a jump function $v$ taking values $\pm 1$, then there exists an $\alpha_0 > 0$ with the property that the zeros of $w$ are at least $\alpha_0 > 0$ apart, as in the statement of Theorem \ref{thm1}.
\end{rem}

The energy estimate above is a crucial ingredient to prove slow motion of solutions of \eqref{PDE}, when the initial data is close in the $L^1$ norm to a $BV$ function, as in \cite{BronsardKohn}, \cite{Grant}, \cite{KaliesVanderVorstWanner}. In particular, we will consider regular solutions of \eqref{PDE}, whose existence is proved in the Appendix, see Theorem \ref{ExistenceTheorem}. Our analysis yields the following result.

\begin{thm} \label{exponentialslowmotion}
Let $v \in BV(\T; \{ \pm 1 \})$ be a function with $N(v) \neq 0$ jumps at $x_k(v)$, for $k = 1, \ldots, N(v)$, and let $q_0 > 0$ be as in Theorem \ref{thm1}.  Let $d := \min_{k} |x_{k+1}(v) - x_k(v)|$. Then there exist  $\eps_0,\delta_0 > 0$ with $d - 4\delta_0 > 0$ such that, if $u^\e$ is a solution of \eqref{PDE} with $u^\e \in L^2((0,\infty); H^4(\T))$, $u^\e_t \in L^2((0,\infty); H^2(\T))$
and initial data $u_{0,\e} \in H^2(\T)$ satisfying
\beq \label{h1}
|| u_{0,\e} - v ||_{L^1(\T)} \leq \delta
\eeq
for $0 < \delta < \delta_0$ and
\beq \label{h2}
E_{\eps}(u_0; \T) \leq E_0(v; \T) + \frac{1}{h(\eps)},
\eeq
for all $0 < \e < \e_0$ and for some function $h: (0,\infty) \to (0,\infty)$, then for all $q < q_0$,
\beq \label{31}
\lim_{\eps \to 0^+} \left\{ \sup_{0 \leq t \leq T_\e } \int_\T  | u^\e(x,t)  - u_{0,\e}(x)  | dx    \right\} = 0,
\eeq
where
\[
T_\e :=  \delta^2 \min\{ h(\eps), \exp((d - 4 \delta) \gamma/\eps)  \}.
\]
\end{thm}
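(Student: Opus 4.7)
The proof follows the Bronsard--Kohn strategy, adapted to the fourth--order flow. Since \eqref{PDE} can be written as $u^\e_t = -\e\, \nabla_{L^2} E_\e$, a direct computation (multiplying by $u^\e_t$ and integrating by parts in $x$) yields the dissipation identity
\beq \label{diss}
\int_0^t \!\!\int_\T |u^\e_s|^2 \diff x \diff s = \e \bigl( E_\e(u_{0,\e}; \T) - E_\e(u^\e(\cdot, t); \T) \bigr).
\eeq
Combining with $u^\e(\cdot, t) - u_{0,\e} = \int_0^t u^\e_s \diff s$ and applying Cauchy--Schwarz in both $x$ and $s$ gives
\beq \label{l1bd}
\| u^\e(\cdot, t) - u_{0,\e} \|_{L^1(\T)} \leq |\T|^{1/2} \, t^{1/2} \, \e^{1/2} \bigl( E_\e(u_{0,\e}; \T) - E_\e(u^\e(\cdot, t); \T) \bigr)^{1/2}.
\eeq
Using $E_0(v;\T) = N m_1$ from the $\Gamma$--convergence results of \cite{FonsecaMantegazza,ChermisiDMFL,CicaleseSpadaroZeppieri}, hypothesis \eqref{h2} bounds $E_\e(u_{0,\e})$ from above by $N m_1 + 1/h(\e)$, so the problem reduces to a matching lower bound on $E_\e(u^\e(\cdot, t))$ via Theorem \ref{thm1}.

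Theorem \ref{thm1} requires $u^\e(\cdot, t)$ to have at least $N$ zeros with a uniform separation $\alpha_0 > 0$. To maintain this along the flow, I would run a continuity/bootstrap argument. Set
\[
T^* := \sup \bigl\{ t \leq T_\e \colon \| u^\e(\cdot, s) - v \|_{L^1(\T)} \leq 2\delta \text{ for every } s \in [0,t] \bigr\},
\]
which is strictly positive by \eqref{h1} and $L^1$--continuity of $t \mapsto u^\e(\cdot, t)$. The key elementary observation is: if $w \in C(\T)$ satisfies $\| w - v \|_{L^1(\T)} \leq 2\delta$, then for each $k$ the function $w$ vanishes somewhere in $(x_k(v) - 2\delta, x_k(v) + 2\delta)$, since otherwise $w$ has constant sign on that interval and differs from $v$ by at least $1$ on the half where $v$ takes the opposite sign, forcing $\|w - v\|_{L^1(\T)} \geq 2\delta$. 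Choosing $\delta_0 < d/4$ makes these $N$ intervals pairwise disjoint, so $u^\e(\cdot, t)$ has at least $N$ zeros separated by at least $d - 4\delta > 0$.

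Applying Theorem \ref{thm1} with $\alpha_0 = d - 4\delta$ then yields, for every $t \in [0, T^*]$,
\[
E_\e(u_{0,\e}; \T) - E_\e(u^\e(\cdot, t); \T) \leq \tfrac{1}{h(\e)} + NC \exp\!\Bigl(-\tfrac{(d - 4\delta)\gamma}{\e}\Bigr),
\]
and the definition of $T_\e$ was engineered precisely so that $t/h(\e) \leq \delta^2$ and $t \exp(-(d - 4\delta)\gamma/\e) \leq \delta^2$ for $t \leq T_\e$. Inserting into \eqref{l1bd} gives
\[
\sup_{0 \leq t \leq T^*} \| u^\e(\cdot, t) - u_{0,\e} \|_{L^1(\T)} \leq K \, \e^{1/2} \, \delta,
\]
with $K$ depending only on $|\T|$, $N$, and $C$. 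The triangle inequality upgrades this to $\| u^\e(\cdot, t) - v \|_{L^1(\T)} \leq (K \e^{1/2} + 1)\delta$ on $[0, T^*]$, which is strictly less than $2\delta$ once $\e < \e_0$ is small enough that $K\e^{1/2} < 1$; by continuity, this rules out $T^* < T_\e$, so $T^* = T_\e$. Letting $\e \to 0^+$ then yields \eqref{31}.

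I expect the main technical obstacles to be ancillary to this outline. First, rigorously justifying \eqref{diss} requires the regularity of $u^\e$ afforded by Theorem \ref{ExistenceTheorem} together with an approximation argument that turns the formal gradient--flow identity into an actual inequality. Second, since Theorem \ref{thm1} is applied with the parameter $\alpha_0 = d - 4\delta$ depending on $\delta$, one must check that the constant $C$ and the threshold $\e_0$ from that theorem can be chosen uniformly over $\delta \in (0, \delta_0)$; this requires inspecting the dependence of the proof of Theorem \ref{thm1} on $\alpha_0$. The geometric zero--persistence lemma and the Cauchy--Schwarz estimate are then routine.
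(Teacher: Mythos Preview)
Your proof is correct and follows essentially the same strategy as the paper: energy dissipation \eqref{diss}, the elementary zero--persistence argument from $L^1$ closeness to $v$, and Theorem \ref{thm1} to bound the energy drop. The only organizational difference is in how the argument is closed: the paper, following Grant, works with the quantity $\int_0^{T_\e}\!\int_\T |u^\e_t|\,dx\,dt$ and chooses $T_\e$ to be the first time this equals $\delta$ (handling separately the case where the total integral over $(0,\infty)$ never reaches $\delta$), whereas you run a direct continuity/bootstrap on $T^* := \sup\{t \le T_\e : \|u^\e(\cdot,s)-v\|_{L^1}\le 2\delta \text{ on } [0,t]\}$. Both close the loop in the same way and yield the same final bound $\sup_{t\le T_\e}\|u^\e(\cdot,t)-u_{0,\e}\|_{L^1}\le K\sqrt{\e}\,\delta$; your version is slightly more streamlined. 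Your two flagged concerns are also handled the same way in the paper: the regularity needed for \eqref{diss} is simply assumed in the hypotheses on $u^\e$, and the $\alpha_0$--dependence in Theorem \ref{thm1} is not an issue because $\delta$ is fixed first (so $\alpha_0 = d-4\delta$ is fixed before $\e_0$ is chosen).
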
 

\begin{rem}
If $h(\e) = \exp(d  \gamma/ \eps)$, then
\[
T_\e = \delta \exp((d - 4\delta)  \gamma/ \eps)
\]
which is consistent with the estimates obtained in \cite{Grant} and \cite{KaliesVanderVorstWanner}. On the other hand, we remark that our Theorem \ref{exponentialslowmotion} provides more general results.
\end{rem}

\begin{rem}
To the best of our knowledge, only recently some regularity results for the Swift--Hohenberg equation have been proved, see  \cite{Giorgini}. In the statement of Theorem \ref{exponentialslowmotion} we \emph{assume} that the solutions are sufficiently regular.  In the Appendix we prove existence of solutions (though with weaker regularity) using De Giorgi's technique of Minimizing Movements (see Theorem \ref{ExistenceTheorem}).
\end{rem}

\subsection{Outline of the Proof}

A key step in proving the energy inequality \eqref{thm1res} is a bound from below by the energy of an appropriately chosen test function.  Given $w \in H^2(\T)$ satisfying the assumptions of Theorem \ref{thm1}, we follow \cite{BellettiniNayamNovaga} to construct this test function by gluing together minimizers of the energy on each subinterval $I_k := [x_k, x_{k+1}]$, where the admissible class now consists of $H^2(I_k)$ functions that equal zero at the endpoints of $I_k$.
Thus,
\beq
E_\e(w; \T) \ge \sum_k E_\e(\hat{w}_k; I_k),
\eeq 
where $\hat{w}_k$ also solves a fourth order Euler--Lagrange equation corresponding to the energy functional.  

This initial energy inequality has several key advantages.  First, it assumes no assumptions about closeness of $w$ to a step function taking values $\pm 1$.  The required estimates can be \emph{proved} for $\hat{w}_k$.  Secondly, the additional property that $\hat{w}_k$ solves a fourth order ODE on the whole subinterval is key in obtaining a sharper lower bound than the one established in \cite{KaliesVanderVorstWanner}.  Specifically, in the middle of each subinterval $I_k$, we can show that the minimizer $\hat{w}_k = \pm 1 + O(\exp \left(-\gamma (x_{k+1}-x_k)/2\e\right))$, where the exponent $\gamma$ is related to the linearization of the Euler--Lagrange equation.  In fact, obtaining this bound is the central contribution of this paper, starting from Corollary \ref{inter} and culminating in Proposition \ref{A3}.  The proofs of Lemmas \ref{2.2} and \ref{A2}, which give the initial crude estimates on the `closeness' of $\hat{w}_k$ to $\pm 1$, follow the ideas of \cite{KaliesVanderVorstWanner} supplemented by the use of the interpolation inequality given in Lemma \ref{interpolLemma} and the use of $\hat{w}_k$ instead of the original function $w$. A point of departure is Lemma \ref{LemmaODEclose}, in which the use of a Hartman--Grobman type theorem (see Theorem \ref{sell}, from \cite{Sell}), combined with the extra information on $\hat{w}_k$ and the analysis of the linearized problem, allow us to obtain sharper exponential decay estimate.

Once these bounds on $\hat{w}_k$ are obtained, we show that its energy is larger than the energy of the `optimal profile' connecting the zeros of $\hat{w}_k$ with $\pm 1$ and having energy $m_1/2$.  This is accomplished in the proof of Theorem \ref{thm1}.

In the remainder of the paper, we use the energy lower bound to obtain slow motion results in Section 3.  Finally, in the Appendix we present a proof of existence of solutions for equation \eqref{PDE} in the more general case of a bounded domain $\Omega \subset \R^n$, along with partial regularity results for the solutions themselves.

\section{Preliminaries and Assumptions}
Throughout this paper we will work with a double--well potential $W: \R \to [0,\infty)$ satisfying
\begin{align}
\label{w1} &W \in C^5(\mathbb{R}), \ W(s) = W(-s), \text{ for all } s \in \R; \\
\label{w2} &W(s) > 0, \text{ for } s \geq 0, s \neq 1; \\
\label{w3} &W(1) = W'(1) = 0; \\
\label{w4} &\text{there exists } 0 < c_W \leq 1 \text{ such that }
W(s) \ge c_W |s - 1|^2, \mbox{ for } s \ge 0.
\end{align}
A prototype for $W$ is given by 
\beq \label{particularW}
W(s) := \frac{1}{4} (s^2 - 1)^{2}.
\eeq

\subsection{$\Gamma$--convergence and Interpolation Inequalities}
In this section we recall some properties of the energy
\beq \label{Giovanni1}
E_{\eps}(u;\Omega) := \int_{\Omega} \left( \frac{1}{\eps} W(u) - \eps q |\nabla u|^2 + \eps^3 |\nabla^2 u|^2  \right) dx, 
\eeq
in the more general setting where $\Omega$ is a bounded open set of $\R^n$ with $C^1$ boundary, $q > 0$ is a small parameter, and $W$ is a double--well potential, as in \eqref{particularW}.
In \cite{ChermisiDMFL} Chermisi, Dal Maso, Fonseca and Leoni proved that the sequence of functionals $\E_{\eps}: L^2(\Omega) \to \R \cup \{ +\infty  \}$, defined by
\[
\E_{\eps}(u) :=
\begin{cases}
E_{\eps}(u;\Omega) &\text{if} \ u \in H^2(\Omega), \\
+\infty &\text{if} \ u \in L^2(\Omega) \setminus H^2(\Omega),
\end{cases}
\]
$\Gamma$--converges as $\eps \to 0^+$ to the functional $\E_0: L^2(\Omega) \to \R \cup \{ +\infty  \}$,
\[
\E_0(u) := 
\begin{cases}
m_n \text{Per}_{\Omega}(\{ u=1  \}) &\text{if} \ u \in BV(\Omega; \{ -1, +1 \}), \\
+\infty &\text{if} \ u \in L^2(\Omega) \setminus BV(\Omega; \{ -1, +1 \}),
\end{cases}
\]
where
\[
m_n := \inf \left\{ E_{\eps}(u;Q) : 0 < \eps \leq 1, u \in  \mathcal{A}_n  \right\},
\]
$Q := \left( -\frac{1}{2}, \frac{1}{2}  \right)^n$, and
\[
\begin{aligned}
\mathcal{A}_n := \Big\{ u \in &H^2_{\text{loc}}(\R^n), \ u(x) = -1 \ \text{near} \ x \cdot e_n = - \frac{1}{2}, \\
&u(x) = 1 \ \text{near} \ x \cdot e_n = \frac{1}{2}, \\
&u(x) = u(x + e_i) \ \text{for all} \ x \in \R^n, i= 1, \ldots, n-1 \Big\}.
\end{aligned}
\]
We define the one--dimensional rescaled energy
\beq
\label{1DF}
E(v; A) := \int_{A} \left( W(v) -  q (v')^2 +  (v'')^2  \right) dx,
\eeq
and we introduce the set of admissible functions
\beq
\begin{aligned}
\mathcal{A} &:= \left\{ v \in H^2_{\text{loc}}(\R) : v(x) = -1 \ \text{near} \ x= a, v(x) = 1 \ \text{near} \ x= b    \right\}.
\end{aligned}
\eeq
We note that it was proved in \cite{ChermisiDMFL}, Section 5.1, that
\beq \label{m1}
m_1 = \inf \left \{ E(v; \R)   : v \in H^2_{\text{loc}}(\R), \lim_{x \to \pm \infty} v(x) = \pm 1  \right \},
\eeq
so that in dimension $n = 1$ we have
\[
\E_0(u) =
\begin{cases}
Nm_1 &\text{if} \ u \in BV((a,b); \{ -1, +1 \}) \\
+\infty &\text{if} \ u \in L^2((a,b)) \setminus BV((a,b); \{ -1, +1 \}),
\end{cases}
\]
where $N$ is the number of jumps of the function $u$.
We further define
\begin{align}
\label{mpmDef}
m_\pm & := \inf \big \{ E(u; \R^+)   : u \in H^2_{\text{loc}}(\R^+), \lim_{x \to \infty} u(x) = \pm 1, \quad u(0) = 0  \big \} \nonumber \\
& = \inf \big \{ E(u; \R^-)   : u \in H^2_{\text{loc}}(\R^-), \lim_{x \to -\infty} u(x) = \pm 1, \quad u(0) = 0  \big \}
\end{align}
and remark that in our case of symmetric potential $W$, $m_+ = m_- = m_1/2$. One of the key tools to prove the $\Gamma$--convergence result is the following nonlinear interpolation inequality, see e.g. Theorem 3.4 in \cite{ChermisiDMFL}.

\begin{lemma} \label{interpN}
Let $\Omega$ be a bounded open set of $\R^n$ with $C^1$ boundary, and assume that $W$ satisfies \eqref{w1}--\eqref{w4}. Then there exists a constant $q^* > 0$, independent of $\Omega$, such that for every $- \infty < q < q^*/N$ there exists $\e_0 = \e_0(\Omega, q) > 0$ such that
\[
q\e^2 \int_\Omega |\nabla u|^2 dx \leq \int_\Omega W(u) dx + \e^4  \int_\Omega |\nabla^2 u|^2 dx
\]
for every $\e \in (0, \e_0)$ and every $u \in H^2(\Omega)$.
\end{lemma}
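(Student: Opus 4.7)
The plan is to reduce the inequality to a classical Gagliardo--Nirenberg-type interpolation, while circumventing the fact that $W$ vanishes at the two wells $\pm 1$. The main difficulty is that $\int_\Omega W(u)\,dx$ does not directly control $\int_\Omega u^2\,dx$, so one must first replace $u$ by a ``shifted'' function whose $L^2$ norm is genuinely controlled by $\int_\Omega W(u)\,dx$, and then carefully handle the chain-rule terms produced by the shift.

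First I would establish the standard interpolation on the bounded Lipschitz domain $\Omega$: for every $\eta>0$ there exists $C_\eta>0$, depending only on $\Omega$ and $\eta$, such that
\[
\int_\Omega |\nabla v|^2\,dx \le \eta \int_\Omega |\nabla^2 v|^2\,dx + C_\eta \int_\Omega v^2\,dx \qquad \text{for every } v\in H^2(\Omega).
\]
This follows by integration by parts, Young's inequality, and a trace estimate on $\partial\Omega$. Next, let $\rho\in C^\infty(\R)$ be an odd, bounded function with $\rho(s)=-1$ for $s\ge\tfrac12$ and $\rho(s)=1$ for $s\le-\tfrac12$, and set $v:=u+\rho(u)\in H^2(\Omega)$. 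By \eqref{w4} together with the symmetry \eqref{w1}, $v^2\le W(u)/c_W$ on $\{|u|\ge\tfrac12\}$; on the transition set $\{|u|<\tfrac12\}$ one has $|v|\le 3/2$ while $W(u)\ge w_0>0$ by \eqref{w2}, so that $\int_\Omega v^2\,dx\le C_W\int_\Omega W(u)\,dx$.

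The chain rule then gives $\nabla v=(1+\rho'(u))\nabla u$ and $\nabla^2 v=\rho''(u)\nabla u\otimes\nabla u+(1+\rho'(u))\nabla^2 u$, where $\rho'$ and $\rho''$ vanish outside the transition set. Inserting these identities into the interpolation applied to $v$, choosing $\eta$ of order $\e^2$, and multiplying through by $q\e^2$ produces the desired inequality up to an error term supported on $\{|u|<\tfrac12\}$. The main obstacle is absorbing this error, in particular the contribution $\int_{\{|u|<1/2\}}|\rho''(u)|^2|\nabla u|^4\,dx$ arising from $|\nabla^2 v|^2$: using the pointwise bound $|u|\le\tfrac12$ on this set together with an auxiliary Gagliardo--Nirenberg interpolation, one trades $|\nabla u|^4$ for a combination of $\int|\nabla u|^2$ and $\int|\nabla^2 u|^2$, which can then be absorbed back into the left-hand side provided $q$ is sufficiently small. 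This absorption is precisely what produces the threshold $q<q^\ast/N$.
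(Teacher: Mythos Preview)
The paper does not prove this lemma; it is quoted from \cite{ChermisiDMFL} (Theorem~3.4 there), so there is no in-paper argument to compare against. That said, your outline has a genuine gap in the transition region. Write $v=\psi(u)$ with $\psi(s)=s+\rho(s)$. For the pointwise bound $\psi(s)^2\le CW(s)$ underlying $\int v^2\le C\int W(u)$ you need $\psi(\pm1)=0$, and with your $\rho$ this holds. But then Rolle's theorem forces $\psi'=1+\rho'$ to vanish somewhere in $(-1,1)$ --- in fact in $(-\tfrac12,\tfrac12)$, since $\rho'\equiv0$ outside that interval. Because $\nabla v=(1+\rho'(u))\nabla u$, the interpolation inequality applied to $v$ gives \emph{no} lower bound on $\int_{\{|u|<1/2\}}|\nabla u|^2$: this contribution to $q\e^2\int|\nabla u|^2$ is not an ``error term'' that can be absorbed, it is the full quantity on a set where you have lost all control, and your sketch offers no separate mechanism to estimate it.

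The handling of the quartic term $\e^4\int_{\{|u|<1/2\}}|\rho''(u)|^2|\nabla u|^4$ has a related defect. Gagliardo--Nirenberg cannot be applied on the irregular sublevel set $\{|u|<\tfrac12\}$, and applied globally to $u$ it would require an $L^\infty$ bound on $u$ that is unavailable in general dimension. If instead you apply it to a bounded truncation $T(u)$ with $T'\equiv1$ on $[-\tfrac12,\tfrac12]$, the chain rule regenerates a term $\int|T''(u)|^2|\nabla u|^4$ of the same form on $\{T''(u)\neq0\}$, and for any bounded smooth $T$ the ratio $|T''|^2/|T'|^4$ blows up wherever $T'$ becomes small, so the loop does not close. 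The argument in \cite{ChermisiDMFL} avoids these issues by a different route (a localisation to cubes at scale~$\e$ combined with the one--dimensional inequality and scaling), which is also where the dimensional factor in the threshold $q<q^*/n$ enters --- something your sketch does not account for.
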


In particular, in the one dimensional setting, we will often use the following nonrescaled version of the previous result, see Lemma 3.1 in \cite{CicaleseSpadaroZeppieri}.

\begin{lemma} \label{interpolLemma}
Let $W$ be a continuous potential satisfying \eqref{w2}--\eqref{w4}. Let $I \subset \R$ be an open, bounded interval. Then there exists a constant $q^* > 0$ such that 
\[
\label{EQ}
q^* \int_I (u')^2 dx \leq \frac{1}{\La^1(I)^2} \int_I W(u) dx + \La^1(I)^2\int_I (u'')^2 dx
\]
for every $u \in H^2(I)$.
\end{lemma}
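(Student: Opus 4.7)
The plan is to combine a scaling reduction with a contradiction/compactness argument. First, by the change of variables $v(y):=u(a+|I|y)$ for $I=(a,a+|I|)$, one checks that $\int(u')^2$, $\int(u'')^2$ and $\int W(u)$ scale as $|I|^{-1}\int(v')^2$, $|I|^{-3}\int(v'')^2$ and $|I|\int W(v)$ respectively, so the claimed inequality on $I$ is equivalent to the unit-interval version
\[
q^*\int_0^1 (v')^2\,dy \;\le\; \int_0^1 W(v)\,dy+\int_0^1 (v'')^2\,dy,\qquad v\in H^2(0,1),
\]
and I would reduce to proving this.

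Suppose for contradiction that no universal $q^*>0$ works. Then for each $n\in\N$ there exists $u_n\in H^2(0,1)$ with $B_n:=\int(u_n')^2\,dx>0$ and
\[
\int_0^1 W(u_n)\,dx<\frac{B_n}{n},\qquad \int_0^1 (u_n'')^2\,dx<\frac{B_n}{n}.
\]
I would split on whether the sequence $\{B_n\}$ is bounded, using the coercivity estimate $W(s)\ge c_W(|s|-1)^2$ which follows from \eqref{w1} and \eqref{w4}, to rule out each case.

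If $B_n\to\infty$ along a subsequence, rescaling $v_n:=u_n/\sqrt{B_n}$ gives $\int(v_n')^2=1$ and $\int(v_n'')^2\to 0$; compactness of $H^1\hookrightarrow L^2$ then forces $v_n'\to g$ strongly in $L^2$ for some constant $g$ with $|g|=1$, so $v_n(x)-v_n(1/2)$ converges uniformly to an affine function. In either subcase ($v_n(1/2)$ bounded or escaping to infinity) one exhibits a subinterval $J\subset(0,1)$ of positive measure on which $|v_n|\ge c>0$ for large $n$, whence $|u_n|\ge c\sqrt{B_n}\to\infty$ on $J$ and coercivity of $W$ at infinity forces $\int W(u_n)\ge c' B_n$, contradicting $\int W(u_n)<B_n/n$. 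If instead $\{B_n\}$ stays bounded, then $\int W(u_n)\to 0$ together with $\int (u_n'')^2\to 0$ gives, along a subsequence, $u_n\to u_\infty$ in $C^1([0,1])$ via the compact embedding $H^2\hookrightarrow C^1$, and the continuity of $W$ forces $u_\infty\equiv\pm 1$; taking the sign $+1$ without loss of generality, one has $u_n\ge 1/2$ eventually, hence $\int(u_n-1)^2\,dx\le c_W^{-1}\int W(u_n)\,dx\le B_n/(c_W n)$. Applying the standard one-dimensional interpolation estimate $\|h'\|_{L^2}^2\le C(\|h\|_{L^2}^2+\|h''\|_{L^2}^2)$ to $h_n:=u_n-1$ then yields $B_n\le C(1+c_W^{-1})B_n/n$, again a contradiction for $n$ large.

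The main difficulty I anticipate is precisely this dichotomy: the right-hand side of the inequality is not homogeneous in $u$, so one cannot simply normalize $\|u_n'\|_{L^2}=1$ and pass to a weak limit in a single step. The two regimes therefore require genuinely different mechanisms — coercivity of $W$ at infinity handles the unbounded case, while a Gagliardo--Nirenberg-type interpolation near the bottom of the well handles the bounded case — and carefully tying the compactness analysis to hypothesis \eqref{w4} is where the substantive work lies.
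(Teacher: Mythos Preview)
The paper does not prove this lemma itself; it simply cites Lemma~3.1 of \cite{CicaleseSpadaroZeppieri}. Your scaling reduction to the unit interval is correct, and your contradiction/compactness dichotomy is a sound way to establish the result; the proof in \cite{CicaleseSpadaroZeppieri} is likewise a contradiction argument, so your strategy is in the same spirit.

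Two small points are worth tightening. First, you invoke \eqref{w1} to obtain the two--sided coercivity $W(s)\ge c_W(|s|-1)^2$, whereas the lemma as stated only lists \eqref{w2}--\eqref{w4}; some information about $W$ on $(-\infty,0)$ is genuinely needed (otherwise the inequality can fail), so either symmetry or an analogous lower bound near $-1$ and at $-\infty$ must be assumed---this is almost certainly a typographical slip in the paper, since \eqref{w1} is in force throughout. Second, in the bounded case you appeal to the compact embedding $H^2(0,1)\hookrightarrow C^1([0,1])$, which requires an $L^2$ bound on $u_n$; this is not immediate from $B_n$ bounded alone, but it does follow from $\int W(u_n)\to 0$ together with the coercivity $W(s)\ge c_W(|s|-1)^2$, and you should make that step explicit. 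With these clarifications the argument goes through.
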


\begin{corollary} \label{inter}
Let $W$ and $q^*$ be as in Lemma \ref{interpolLemma}.  Then there exist $\sigma > 0$ such that for every open interval $I$, every $0 < \eps \leq \La^1(I)$, and every $-\infty < q \leq q^*/4$ ,
\beq
\label{inter1}
q \eps^2 \int_I (u')^2 dx \leq \int_I \left( W(u) + \eps^4 (u'')^2  \right) dx,
\eeq
and
\beq
\label{inter2}
E_\eps(u; I) \ge \sigma \int_I  \left( W(u) + \eps^2 (u')^2 + \eps^4 (u'')^2 \right) dx.
\eeq
for all $u \in H_{loc}^2(I)$.
\end{corollary}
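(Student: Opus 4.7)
The strategy is to reduce both inequalities to Lemma \ref{interpolLemma} via a partitioning argument, with the hypothesis $|I| \geq \eps$ serving to keep the resulting geometric constants uniform in $I$.

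For \eqref{inter1}, I partition $I$ into $N := \lceil |I|/\eps \rceil$ subintervals $\{I_j\}_{j=1}^N$ of equal length $\delta := |I|/N$; a short computation shows $\delta \in [\eps/2, \eps]$ whenever $|I| \geq \eps$. Applying Lemma \ref{interpolLemma} on each $I_j$ and multiplying through by $\delta^2$ yields
\[
q^* \delta^2 \int_{I_j} (u')^2 \, dx \leq \int_{I_j} W(u) \, dx + \delta^4 \int_{I_j} (u'')^2 \, dx.
\]
Summing over $j$ and using $\delta^2 \geq \eps^2/4$ together with $\delta^4 \leq \eps^4$ gives
\[
\frac{q^*}{4} \eps^2 \int_I (u')^2 \, dx \leq \int_I W(u) \, dx + \eps^4 \int_I (u'')^2 \, dx,
\]
from which \eqref{inter1} follows immediately for every $q \leq q^*/4$.

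For \eqref{inter2}, I split $E_\eps$ with a parameter $\tau \in (0,1)$, writing $\tfrac{1}{\eps}\int_I W = (1-\tau)\tfrac{1}{\eps}\int_I W + \tau \tfrac{1}{\eps}\int_I W$ and similarly for the curvature term. Applying \eqref{inter1} in its strong form (constant $q^*/4$) to the $(1-\tau)$ portion to absorb the negative gradient term yields
\[
E_\eps(u;I) \geq \Bigl[(1-\tau)\tfrac{q^*}{4} - q\Bigr] \eps \int_I (u')^2 \, dx + \tau \Bigl( \tfrac{1}{\eps}\int_I W(u) \, dx + \eps^3 \int_I (u'')^2 \, dx \Bigr).
\]
Choosing $\tau \in (0, 1 - 4q/q^*]$ (positive for $q < q^*/4$) makes both coefficients nonnegative, and exploiting $1/\eps \geq 1$, $\eps \geq \eps^2$, and $\eps^3 \geq \eps^4$ for $\eps \leq 1$, the right-hand side dominates $c_1 \int_I W + c_1 \eps^2 \int_I (u')^2 + c_1 \eps^4 \int_I (u'')^2$ for some $c_1 > 0$ depending only on $q^*$ and $q$, which gives \eqref{inter2}.

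The principal technical point lies entirely in \eqref{inter1}: the partition must be designed so that the subinterval length $\delta$ is uniformly comparable to $\eps$, i.e., $\delta \asymp \eps$ with constants independent of $I$ and $\eps$. The hypothesis $|I| \geq \eps$ is precisely what permits this via the choice $N = \lceil |I|/\eps \rceil$; once the constant $q^*/4$ is secured in \eqref{inter1}, the remaining manipulation in \eqref{inter2} reduces to absorbing a single negative term against a convex combination of positive contributions.
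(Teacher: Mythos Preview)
Your argument is correct and follows essentially the same route as the paper. For \eqref{inter1} the paper rescales by $\eps$ and then partitions the rescaled interval into pieces of length between $1/2$ and $2$, whereas you partition $I$ directly into pieces of length between $\eps/2$ and $\eps$; these are the same idea up to a change of variables. For \eqref{inter2} the paper writes the exact algebraic identity
\[
W - q\eps^2(u')^2 + \eps^4(u'')^2 = (1-\sigma)\Bigl[W - \tfrac{q+\sigma}{1-\sigma}\eps^2(u')^2 + \eps^4(u'')^2\Bigr] + \sigma\bigl[W + \eps^2(u')^2 + \eps^4(u'')^2\bigr]
\]
and kills the first bracket with \eqref{inter1}, while you instead apply \eqref{inter1} to a $(1-\tau)$--fraction of the $W$ and $(u'')^2$ terms and see what is left over; the two computations are equivalent.

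Two small remarks. First, your final step invokes $\eps\le 1$ to pass from the coefficients $1/\eps,\eps,\eps^3$ to $1,\eps^2,\eps^4$; this hypothesis is not in the corollary, but the paper's own proof in fact establishes the stronger bound $E_\eps(u;I)\ge \tfrac{\sigma}{\eps}\int_I(W+\eps^2(u')^2+\eps^4(u'')^2)\,dx$ (which is what is actually used downstream), so your intermediate inequality already suffices without that step. Second, both your $\tau$ and the paper's $\sigma$ must be chosen depending on $q$ and degenerate as $q\uparrow q^*/4$, so the quantifier order in the statement should be read with $\sigma=\sigma(q)$; this is a feature of the corollary's phrasing, not a defect in your proof.
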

\begin{proof}
Let $I = (a,b)$ and $u \in H^2((a,b))$.  We change variables $v(y) := u(\eps x)$, 
subdivide the resulting rescaled domain $I_\eps = (a/\eps, b/\eps)$ into $\left[\frac{b-a}{\eps}\right]$ subintervals, $I_\eps^k$, of length between $1/2$ and $2$ (since $0 < \eps \le b-a$) and use Lemma \ref{interpolLemma} to obtain
\begin{align}
\label{interPol3}
\frac{q^*}{4}  \int_{a}^{b} (u')^2 dx & = \frac{q^*}{4 \eps}  \int_{a/\eps}^{b/\eps} (v')^2 dy =  \frac{1}{4\eps} \sum_k q^* \int_{I_\eps^k} (v')^2 dy \leq  \frac{1}{4\eps} \sum_k  \int_{I_\eps^k}  \left( 4 W(v) + 4 (v'')^2  \right) dy \nonumber \\
&  = \frac{1}{\eps}  \int_{a/\eps}^{b/\eps}  \left( W(v) + (v'')^2  \right) dy  = \int_a^b (W(u) + \eps^3 (u'')^2 ) dx .
\end{align}
Since $q \le q^*/4$, \eqref{inter1} easily follows.  To prove \eqref{inter2} we follow closely the strategy used in the proof of Theorem 1.1 of \cite{ChermisiDMFL} and proceed as follows. Fix $\sigma \in (0,1)$  sufficiently small so that $(q + \sigma)/ (1- \sigma) < q^*/4$.  Then,
\beq
\label{enSp1}
\begin{aligned}
\int_a^b (W(u) - q\eps^2 (u')^2 + \eps^4 (u'')^2) dx &= (1-\sigma) \int_a^b \left( W(u) - \frac{q + \sigma}{1- \sigma} \eps^2 (u')^2 + \eps^4 (u'')^2 \right) dx \\
&+ \sigma \int_a^b \left( W(u) + \eps^2 (u')^2 + \eps^4 (u'')^2 \right) dx,
\end{aligned}
\eeq
and \eqref{inter2} follows since by \eqref{interPol3} the first term on the right-hand side of \eqref{enSp1} is nonnegative.

\end{proof}

The following lemmas established for a generalization of the  Modica--Mortola Functional in \cite{KaliesVanderVorstWanner} will be useful to prove our main result.  While our energy does not satisfy the assumptions of \cite{KaliesVanderVorstWanner}, their argument is easily extended to our case with the help of the  interpolation inequality \eqref{inter2}.  In particular, Lemma \ref{2.2},  shows that an $H^2$ function with a uniformly bounded energy, necessarily takes values close to $\{\pm 1\}$ and has small derivatives, except on a set of measure $O(\eps)$ and Lemma \ref{A2} gives a characterization of the global minimizers for the energy $E(\cdot, \cdot)$, defined in \eqref{1DF}, subject to small boundary conditions. 
\begin{lemma} \label{2.2}
Let $I$ be an open interval, $M > 0$ and $0 < \delta < 1$. Then there exists a constant $C_1 > 0$ such that  for any $0 < \eps \leq \La^1(I)$
and every $u \in H^2(I)$ with $E_{\eps}(u; I) \leq M$ the following property holds: there is a measurable set $J \subset I$ with $\La^1(J) \leq C_1 \eps$ such that
\[
{\rm dist}(u(x), \{ \pm 1 \}) < \delta \quad and \quad |\eps u'(x) | < \delta \quad  \text{and}
\]
hold for all $x \in I \setminus J$, where ${\rm dist}$ denotes the usual distance between a point and a set.
\end{lemma}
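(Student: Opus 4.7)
The plan is to prove the lemma by using the interpolation inequality in Corollary \ref{inter} to convert the hypothesis $E_\eps(u; I) \leq M$ into an $O(\eps)$ bound on $\int_I W(u) \, dx$ and on $\eps^2 \int_I (u')^2 \, dx$, and then to estimate the measures of the \emph{bad} sets where $u$ is far from $\{\pm 1\}$ or $|\eps u'|$ is large by Chebyshev--type arguments. The scaling observation driving everything is that the original functional $E_\eps$ contains the term $\eps^{-1} W(u)$, so a uniform bound on $E_\eps$ forces both $\int_I W(u)\,dx$ and $\eps^2 \int_I (u')^2\,dx$ to be of order $\eps$, which is exactly the scaling required by the conclusion.

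To implement this, I first introduce the two bad sets
\[
J_1 := \{x \in I \,:\, {\rm dist}(u(x), \{\pm 1\}) \geq \delta\}, \qquad J_2 := \{x \in I \,:\, |\eps u'(x)| \geq \delta\},
\]
and set $J := J_1 \cup J_2$; then by construction every $x \in I \setminus J$ satisfies both pointwise inequalities in the statement. Since $u \in H^2(I)$ implies $u' \in H^1(I) \hookrightarrow C(\bar{I})$, the pointwise condition defining $J_2$ is meaningful.

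For $J_1$, combining the quadratic lower bound \eqref{w4} with the symmetry \eqref{w1} yields $W(s) \geq c_W \, {\rm dist}(s, \{\pm 1\})^2$ for every $s \in \R$; hence $W(u(x)) \geq c_W \delta^2$ on $J_1$, and together with the bound on $\int_I W(u) \, dx$ this gives $\La^1(J_1) \leq C \eps$ for some $C = C(M, \sigma, c_W, \delta)$. For $J_2$ a direct Chebyshev estimate gives
\[
\delta^2 \La^1(J_2) \leq \int_{J_2} (\eps u'(x))^2 \, dx \leq \eps^2 \int_I (u')^2 \, dx = O(\eps),
\]
so $\La^1(J_2) \leq C' \eps$ as well. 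Taking $C_1 := C + C'$ completes the estimate.

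The main subtlety lies in bookkeeping rather than in any serious obstacle: one must keep careful track of the $\eps$-scaling in Corollary \ref{inter} so that the three terms appearing on its right--hand side are each of order $\eps$ after accounting for the prefactors in the definition of $E_\eps$. No deeper ingredient is required; in particular, no $L^1$-closeness of $u$ to a step function is used, consistent with the emphasis in the introduction that the subsequent arguments apply to arbitrary $H^2$ functions of bounded energy.
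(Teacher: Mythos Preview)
Your proof is correct and follows essentially the same route as the paper: use the interpolation inequality \eqref{inter2} to turn the bound $E_\eps(u;I)\le M$ into $\int_I W(u)\,dx\le \tfrac{M}{\sigma}\eps$ and $\eps^2\int_I (u')^2\,dx\le \tfrac{M}{\sigma}\eps$, then apply Chebyshev to the two bad sets and take their union. The only cosmetic difference is that the paper bounds $W$ from below on $J_1$ by the constant $c:=\inf\{W(s):\mathrm{dist}(s,\{\pm1\})\ge\delta\}>0$, whereas you invoke the explicit quadratic lower bound $W(s)\ge c_W\,\mathrm{dist}(s,\{\pm1\})^2$ coming from \eqref{w1} and \eqref{w4}; both yield the same estimate.
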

\begin{proof}
By \eqref{inter2}, for every $0 < \eps \leq \La^1(I)$ and $u \in H^2(I)$,
\beq \label{pot}
\begin{aligned}
\int_I \left( W(u) - q\eps^2 (u')^2 + \eps^4 (u'')^2 \right) dx &\geq \sigma \int_I \left( W(u) + \eps^2 (u')^2 + \eps^4 (u'')^2 \right) dx \\
&\geq \sigma \int_I W(u) dx.
\end{aligned}
\eeq
We now let $J_0 := \{ x \in I: \text{dist}(u(x), \{ \pm 1 \} ) \geq \delta   \}$ and from the definition of $W$ we have $c := \inf \{  W(s): \text{dist}(s, \{ \pm 1 \} ) \geq \delta  \} > 0$. Then \eqref{pot} implies
\[
M \geq E_{\eps}(u; I) \geq \frac{\sigma}{\eps} \int_I W(u) dx \geq \frac{c \sigma}{\eps} \La^1(J_0),
\]
and therefore 
\[
\La^1(J_0) \leq \frac{M \eps}{c \sigma}.
\]
Similarly, setting $J_1 := \{ x \in I: | \eps u'(x) | \geq \delta  \}$,  \eqref{pot} yields the estimates
\beq \label{pot2}
M \geq E_{\eps}(u;I) \geq \frac{\sigma}{\eps} \int_I \eps^2 (u')^2 dx \geq \frac{\sigma \delta^2}{\eps} \La^1(J_1)
\eeq
and consequently
\[
\La^1(J_1) \leq \frac{M \eps}{\sigma \delta^2}.
\]
Setting $J := J_0 \cup J_1$ yields the desired result.
\end{proof}
\begin{lemma} \label{A2}
Let $I := (a,b)$ be an open interval and $W \in C^2$ satisfy \eqref{w2}--\eqref{w4}. Given $\alpha = (\alpha_1,  \alpha_2) \in \mathbb{R}^2$, $\beta = (\beta_1, \beta_2 ) \in \mathbb{R}^2$ define
\beq
\mathcal{M}^\pm_{\alpha,\beta} := \{v \in H^2(I): v(a) = \pm 1 + \alpha_0, \, v'(a) = \alpha_1, \, v(b) = \pm 1 + \beta_0, \, v'(b) = \beta_1 \}.
\eeq
Then there exist constants $\delta_0, C > 0$ such that the following holds.  If $\La^1(I) > 1$ and $||\alpha||, ||\beta|| \le \delta < \delta_0$ then the functional $E(\cdot; I)$ defined in \eqref{1DF} has a global minimizer $v_\pm$ on $\mathcal{M}^\pm_{\alpha, \beta}$.  This minimizer $v_\pm$ solves the Euler--Lagrange equation, and satisfies the estimates 
\beq \label{A2bound1}
||v_\pm  \pm 1||_{L^\infty(I)} \le C \delta,
\eeq
\beq \label{A2bound2}
||v^{(k)}_\pm|_{L^{2}(I)} \le C \delta \text{ for } k = 1, \dots, 4.
\eeq
\beq \label{A2bound3}
||v^{(k)}_\pm||_{L^{\infty}(I)} \le C \delta \text{ for } k = 1, \dots, 3.
\eeq
\end{lemma}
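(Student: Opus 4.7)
My plan is to combine the direct method of the calculus of variations with a Hermite--type comparison function and a bootstrap via the Euler--Lagrange equation. I treat the case of $v_-$; the case $v_+$ follows by the symmetry $v \mapsto -v$ since $W$ is even. The main obstacle throughout will be keeping all constants independent of $\La^1(I)$, which can be arbitrarily large, so I would systematically work on unit--length subintervals of $I$ whenever applying Sobolev embeddings or interpolation.

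For existence, I would first record an unrescaled analog of Corollary \ref{inter}: for $q \le q^*/8$ and $\La^1(I) > 1$,
\[
E(u; I) \ge \sigma \int_I \left( W(u) + (u')^2 + (u'')^2 \right) dx, \qquad u \in H^2(I),
\]
which follows by partitioning $I$ into pieces of length between $1/2$ and $2$, applying Lemma \ref{interpolLemma} on each, and splitting weights as in the proof of Corollary \ref{inter}. Weak lower semicontinuity of each term then yields a minimizer $v_- \in \mathcal{M}^-_{\alpha,\beta}$ by the direct method, and standard variational arguments give the Euler--Lagrange equation $2 v_-^{(4)} + 2q v_-'' + W'(v_-) = 0$ on $(a,b)$. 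Next, I would build a Hermite--type competitor $\tilde v := -1 + \eta$, with $\eta(x) := \alpha_0 \phi_1(x-a) + \alpha_1 \psi_1(x-a) + \beta_0 \phi_2(b-x) + \beta_1 \psi_2(b-x)$ for fixed smooth cutoffs $\phi_i, \psi_i$ supported in $[0,1]$ realizing the required Hermite data at the origin. Since $W(-1) = W'(-1) = 0$ implies $W(-1+s) \le Cs^2$ for small $s$, and $\eta$ is supported in two unit intervals, one finds $E(\tilde v; I) \le C\delta^2$; minimality and coercivity then yield
\[
\int_I \left( W(v_-) + (v_-')^2 + (v_-'')^2 \right) dx \le C\delta^2.
\]

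The key qualitative step is to show that $v_-$ stays close to $-1$. Unit--interval Sobolev embedding gives $\|v_-'\|_{L^\infty(I)} \le C\delta$, so $v_-$ has Lipschitz constant $C\delta$; if it ever attained a value in $[-1/2, 0]$, then by continuity the set on which $v_- \in [-1/2, 0]$ would have measure at least $c/\delta$, on which $W(v_-) \ge c_0 > 0$, forcing $\int_I W(v_-) \ge c_0 c/\delta$, contradicting the $O(\delta^2)$ bound for $\delta < \delta_0$ small enough. Hence $v_- < 0$ throughout $I$, and the symmetric growth condition $W(s) \ge c_W(s+1)^2$ for $s \le 0$ (from \eqref{w4} and $W(-s) = W(s)$) gives $\|v_- + 1\|_{L^2(I)} \le C\delta$. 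The $L^\infty$ bound \eqref{A2bound1} then follows from the elementary Sobolev estimate, uniform in $|I|$,
\[
\|v_- + 1\|_{L^\infty(I)}^2 \le \tfrac{1}{|I|}\|v_- + 1\|_{L^2(I)}^2 + 2\|v_- + 1\|_{L^2(I)}\|v_-'\|_{L^2(I)} \le C\delta^2.
\]
Finally, the Euler--Lagrange equation gives $v_-^{(4)} = -\tfrac12 W'(v_-) - q v_-''$, and $|W'(v_-)| \le C|v_- + 1| \le C\delta$ yields $\|v_-^{(4)}\|_{L^2(I)} \le C\delta$; a Gagliardo--Nirenberg interpolation on unit subintervals then produces $\|v_-'''\|_{L^2(I)} \le C\delta$, and the remaining $L^\infty$ bounds in \eqref{A2bound3} follow from unit--interval Sobolev embedding applied to $v_-', v_-'', v_-'''$.
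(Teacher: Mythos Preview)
Your proof is correct and shares the overall architecture of the paper's argument (test-function upper bound, direct method for existence, closeness to $-1$, Euler--Lagrange bootstrap), but the central step---establishing $v_- \le 0$ and $\|v_-+1\|_{L^\infty(I)} \le C\delta$---is handled by a genuinely different mechanism. The paper proceeds through the Modica--Mortola trick: from \eqref{inter2} one extracts $E(v;I) \ge 2\sigma\int_I \sqrt{W(v)}\,|v'|\,dx$, which bounds the energy below by $c_W\sigma\,\bigl|(v(x_1)+1)^2 - (v(x_0)+1)^2\bigr|$ for any pair of points; comparing the endpoint $a$ with a hypothetical zero of $v$ (Step~3), respectively the maximizer of $|v+1|$ with a half-height point (Step~2), yields both the sign control and the $L^\infty$ bound in one stroke. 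You instead first pull out $\|v_-'\|_{L^\infty} \le C\delta$ via unit-interval Sobolev embedding from the already-established $L^2$ bounds on $v_-',v_-''$, and then run a Lipschitz/measure argument to rule out excursions toward zero. One small imprecision: the set $\{v_- \in [-1/2,0]\}$ need not itself have measure $\gtrsim 1/\delta$ (for instance if $v_-$ merely grazes $-1/2$); the clean version tracks the transit from, say, $-3/4$ to $-1/2$, which by the Lipschitz bound must occupy length at least $(1/4)/(C\delta)$ inside $I$. A modest dividend of your route is that $\|v_-+1\|_{L^2} \le C\delta$ follows directly from $\int_I W(v_-) \le C\delta^2$ and the quadratic lower bound on $W$, hence is manifestly independent of $\La^1(I)$, whereas the paper's \eqref{vBounded} carries a hidden factor of $\La^1(I)$; likewise, your systematic use of unit-length subintervals for the Sobolev and Gagliardo--Nirenberg steps is precisely what keeps the final constants in \eqref{A2bound2}--\eqref{A2bound3} uniform in $|I|$, something the paper's Step~4 leaves implicit.
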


\begin{figure}[h]
\centering{
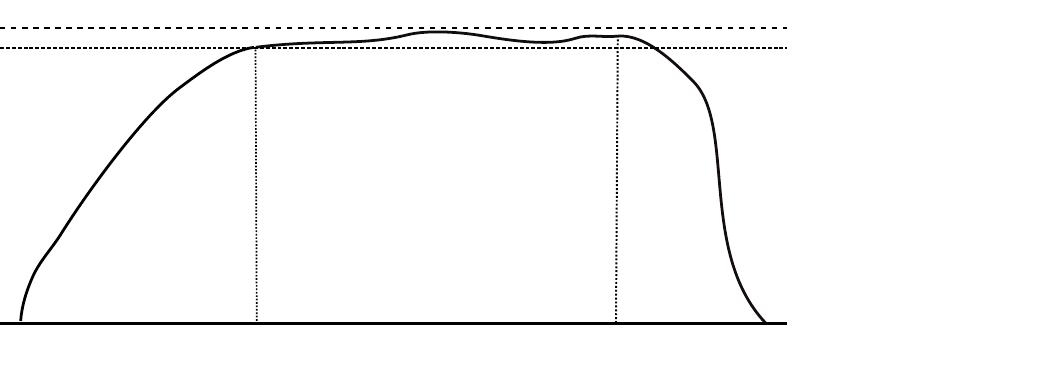 
\caption{If $\hat v$ is close to $1$ at $x_1$ and $x_2$, then it stays close in between.}
\label{fig1}
}
\end{figure}

\begin{proof}
We prove the proposition when $s = -1$, the $s=1$ case being identical.
We divide the proof into several steps. Moreover, we simplify the notation used for the $L^p$ norms when the domain of integration will be clear from the context. \\

\noindent {\bf Step 1.} Fix $\delta > 0$. We claim that there exists $C_1 > 0$ such that if $||\alpha||, ||\beta|| \le \delta$, then 
\beq
\label{infBound}
\inf_{\mathcal{M}^-_{\alpha, \beta}} E(\cdot; I) \le C_1 \delta^2.
\eeq
To show this we note that, if $\varphi_0, \varphi_1 \in C^\infty(\R)$ satisfy $\varphi_i(x) = 0$ for all $x \geq 1/2$, with $\varphi_0(0) = 1, \varphi_0'(0) = 0, \varphi_1(0) = 0$, and $\varphi_1'(0) = 1$, then the function
\beq
\phi(x) := -1+\alpha_0 \varphi_0(x-a) + \alpha_1 \varphi_1(x-a) + \beta_0 \varphi_0(b-x) - \beta_1 \varphi_1(b-x), \ x \in (a,b),
\eeq
belongs to $\mathcal{M}^-_{\alpha,\beta}$.
Using $\phi$ as a test function, \eqref{infBound} follows from Taylor's formula for $W$ and the facts that $W(\pm 1) = W'(\pm 1)= 0$ and $W \in C^2(\mathbb{R})$. \\

\noindent  {\bf Step 2.} Fix $0 < \delta < 1$. We will show that there exists $C_2 > 0$ such that for every $v \in \mathcal{M}^-_{\alpha, \beta}$, with $v \leq 0$ on $I$ and $||\alpha||, ||\beta|| \le \delta$ we have 
\beq \label{step2Ineq}
E(v; I) \ge C_2 ||v+1||_{L^\infty}^2.
\eeq

Suppose that $|v(x)+1| \ge ||v+1||_{\infty}/2$ for all $x \in I$. Using \eqref{w4} and \eqref{inter2} with $\eps = 1$ we have,
\beq
E(v; I) \ge \sigma\int_I W(v) dx \ge \sigma c_W  \int_I |v+1|^{2} \ge \La^1(I) \frac{\sigma}{4} c_W ||v+1||_{L^\infty}^2
\eeq
Otherwise, there are points $x_0, x_1 \in \bar I$ satisfying 
\[
|v(x_0)+1| = \frac{||v+1||_\infty}{2} \ \text{ and } \  |v(x_1) + 1| = ||v + 1||_\infty, 
\]
in which case, again by \eqref{w4}, \eqref{inter2} and Young's Inequality
\[
\begin{aligned}
E(v; I)  & \ge \sigma \int_I \left( W(v) + |v'|^2 \right) dx \ge 2\sigma \int_I \sqrt{W(v)} |v'| \\
&\ge 2 c_W \sigma \left| \int_{x_0}^{x_1} |v+1| v' dx \right|  \\
&= c_W \sigma  \left( (v+1)^2(x_1) - (v+1)^2(x_0) \right) = \frac{\sigma}{2} c_W ||v +1 ||_{L^\infty}
\end{aligned}
\]
and this proves \eqref{step2Ineq}. \\

\noindent {\bf Step 3.} We claim that there exists $\delta_0 >0$ and $C_3 = C_3(\delta_0) > 0$ such that if $||\alpha||, ||\beta|| \le \delta < \delta_0$ and $v \in \mathcal{M}^-_{\alpha,\beta}$, with $E(v; I) \le 2 \inf_{\mathcal{M}^-_{\alpha,\beta}} E$, then 
\beq \label{step3Ineq}
||v+1||_{L^\infty} \le C_3 \delta.
\eeq

By taking $0 < \delta < 1$ sufficiently small, we may assume that $v \le 0$ on  $I$.  Indeed, since $v(a) = -1 + \alpha_0 \leq -1 + \delta < 0$, if $v(x) > 0$ for some $x$, then necessarily there exists $x_1$ such that $v(x_1) = 0$, and so by \eqref{w4},
\[ 
\begin{aligned}
E(v; I) & \ge \sigma \int_I \left( W(v) + |v'|^2 \right) dx \ge 2\sigma \int_a^{x_1} \sqrt{W(v)} |v'| \\
&\ge 2 C_W \sigma \left| \int_{a}^{x_1} |v+1| v' dx \right| \\
&= \sigma C_W \left( (v+1)^{2}(x_1) - (v+1)^{2}(a) \right) \\
&\ge \sigma  (1 - |\alpha_0|^{2}),
\end{aligned}
\]
which contradicts Step 1 for $\delta$ sufficiently small. Hence, Steps 1 and 2 imply \eqref{step3Ineq}. \\

\noindent {\bf Step 4.} Finally, \eqref{inter2} with $\e = 1$ and standard compactness and lower semicontinuity arguments imply the existence of minimizer $v_-$ of $E(\cdot; I)$ and since by previous step $v_- \leq 0$ for $\delta < \delta_0$ and
\beq \label{vBounded}
||v_-  +1||_{L^2}^{2} \le \La^1(I) ||v_- +1||_{L^\infty}^{2} \leq C \delta^2,
\eeq
for some $C > 0$, again using \eqref{inter2} along with \eqref{infBound} yields
\[
||v_-^{(k)} ||_{L^2} \leq C\delta, \text{ for } k = 1,2.
\]
Furthermore, since $W$ is $C^2$, from \eqref{vBounded} and the Mean Value Theorem we have
\beq \label{MVT1}
W'(v_-) = W'(v_-) - W'(-1) \leq \max_{0 \leq \xi \leq 2} W''(\xi)  (v_- +1).
\eeq
The Euler--Lagrange equation
\[
2 v_-^{(iv)} + 2q v_-'' + W'(v_-) = 0,
\]
the $L^\infty$ bound from Step 3 and \eqref{MVT1} imply
\beq
||v_-^{(iv)}||_{L^2} \le  |q| ||v_-''||_{L^2} + \frac{1}{2} ||W'(v_-)||_{L^2} \le |q| ||v_-''||_{L^2} +  C ||v_- + 1||_{L^2} \le C \delta 
\eeq
for some $C > 0$.

The energy bound \eqref{infBound} and standard interpolation inequalities (e.g., see Theorem 6.4 in \cite{FFLM}) imply \eqref{A2bound1}, \eqref{A2bound2}, \eqref{A2bound3}.
\end{proof}

\subsection{The Euler--Lagrange Equation}
In this section we further analyze the behavior of the minimizers of the energy $E_\eps$ with the aid of the corresponding Euler-Lagrange equation, and we prove our main result, Theorem \ref{thm1}.

\begin{lemma} \label{LemmaODEclose}
Consider the ordinary differential equation 
\beq
\label{odeMain}
x' = F(x),
\eeq
where $F: \R^4 \to \R^4$ is a $C^4$ mapping satisfying $F(x_0) = 0$ for some $x_0 \in \R^4$.
Assume $DF(x_0)$ has four eigenvalues $\pm \gamma \pm \delta i$, where $\gamma > 0$ and $\delta \in \mathbb{R}$.
Then for $0 < \la \leq \gamma$ there exist a constant $C(\gamma, \delta) > 0$, $T_0(\gamma, \delta) > 0$ and $R > 0$ such that for all $T>T_0$, if $x : [0,T] \rightarrow B(x_0, R)$ is a solution of \eqref{odeMain}, then the inequality
\beq \label{bound}
|x(t) - x_0| \le C(\gamma,\delta) \exp \left(-\lambda T /2\right)
\eeq
holds for all $t \in \left[ \frac{\la T}{2 \gamma}, T - \frac{\la T}{2 \gamma}  \right]$. In particular, if $\gamma = \lambda$,
\beq \label{boundMiddle}
|x(T/2) - x_0| \le C(\gamma, \delta) \exp \left(-\gamma  T /2\right).
\eeq
\end{lemma}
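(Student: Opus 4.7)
The plan is to linearize the flow near $x_0$ via Sell's Hartman--Grobman type theorem (Theorem \ref{sell}) and then exploit the exponential dichotomy of the resulting linear system. Since $DF(x_0)$ has spectrum $\{\pm\gamma\pm\delta i\}$ with $\gamma>0$, the origin is a hyperbolic saddle for the linearization $y'=DF(x_0)y$, with two-dimensional stable and unstable subspaces $E^s$ and $E^u$; passing to real Jordan coordinates one has $|e^{tA}v|=e^{-\gamma t}|v|$ on $E^s$ and $|e^{tA}v|=e^{\gamma t}|v|$ on $E^u$, because the rotation blocks generated by $\delta$ are norm-preserving.

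First I would invoke Sell's theorem to produce a $C^1$ conjugacy $h$, defined in a neighborhood of $x_0$, intertwining the flow of \eqref{odeMain} with the linear flow $y'=DF(x_0)y$. Smoothness of $h$ and $h^{-1}$ yields constants $c_1,c_2>0$ with $c_1|y|\le|h(y)-x_0|\le c_2|y|$ on some $B(0,R')$; choosing $R>0$ small enough that $B(x_0,R)\subset h(B(0,R'))$, the curve $y(t):=h^{-1}(x(t))$ is a solution of the linearized equation confined to $B(0,R')$ throughout $[0,T]$. Any estimate of the form $|y(t)|\le Ce^{-\la T/2}$ then transfers directly to $|x(t)-x_0|$ via the upper Lipschitz bound on $h$.

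The heart of the argument is to extract exponential smallness from confinement through the stable/unstable decomposition. Writing $y(t)=y^s(t)+y^u(t)$ along $E^s\oplus E^u$, the identity $|y^u(T)|=e^{\gamma T}|y^u(0)|$ combined with $|y^u(T)|\le CR'$ yields $|y^u(0)|\le CR'e^{-\gamma T}$, and symmetrically $|y^s(T)|\le CR'e^{-\gamma T}$. Propagating each component along its invariant subspace gives
\beq
|y^u(t)|\le CR'e^{-\gamma(T-t)},\qquad |y^s(t)|\le CR'e^{-\gamma t},
\eeq
hence $|y(t)|\le CR'\bigl(e^{-\gamma t}+e^{-\gamma(T-t)}\bigr)$. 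For $t\in\left[\frac{\la T}{2\gamma},\,T-\frac{\la T}{2\gamma}\right]$ both $\gamma t$ and $\gamma(T-t)$ exceed $\la T/2$, so $|y(t)|\le 2CR'\,e^{-\la T/2}$ and consequently $|x(t)-x_0|\le c_2|y(t)|\le C(\gamma,\delta)\,e^{-\la T/2}$, which is \eqref{bound}. Taking $\la=\gamma$ and $t=T/2$ recovers \eqref{boundMiddle}.

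The main obstacle I anticipate is checking that Sell's theorem applies with the regularity we need and that the resulting constants $c_1,c_2$ and the linearization radius $R$ can be chosen independently of $T$, so that $T_0$ is truly uniform; the hyperbolicity of $DF(x_0)$ (all real parts nonzero) is enough for at least a $C^1$ conjugacy, which is all that is required here. The oscillatory case $\delta\neq 0$ enters only through the constants in the real-Jordan normalization and does not affect the exponents, so it only modifies $C(\gamma,\delta)$.
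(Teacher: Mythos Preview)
Your proposal is correct and follows essentially the same route as the paper: linearize via Sell's theorem, exploit the confinement of the linearized solution on $[0,T]$ to bound the stable and unstable components, and transfer back through the $C^1$ conjugacy. The only cosmetic difference is that the paper writes the linear solution explicitly in the eigenvector basis $P$ and bounds the scalar coefficients $c_1,\dots,c_4$ from the endpoint constraints, whereas you phrase the same computation as a stable/unstable splitting $y=y^s+y^u$; the resulting bound $|y(t)|\le C(e^{-\gamma t}+e^{-\gamma(T-t)})$ is identical.

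One caution: your closing remark that ``hyperbolicity of $DF(x_0)$ is enough for at least a $C^1$ conjugacy'' is not true in general in dimension four---Hartman--Grobman only gives a $C^0$ conjugacy without further non-resonance conditions. The paper handles this separately (Lemma~\ref{sell2}) by verifying that the specific spectrum $\{\pm\gamma\pm\delta i\}$ satisfies Sell's strong Sternberg condition of order $2$ with $2$-smoothness $K=1$, which is what actually licenses the $C^1$ linearization you need.
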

\begin{proof}
Changing variables if necessary, we may assume, without loss of generality, that $x_0 = 0$.  Let $A := DF(0)$.
By an extension of the Hartman--Grobman Theorem (see, e.g. \cite{Sell} and Lemma \ref{sell2} in the Appendix),
there exist two open neighborhoods of $0$, $V_1, V_2 \subset \R^4$, and a diffeomorphism $h: V_1 \rightarrow V_2$ of class $C^1$, with $h(0) = 0$, such that if $x(t) \in V_1$ for all $t \in [0,T]$ then the funciton $y(t) := h(x(t)), t \in [0,T]$ is a solution of the linearized system 
\beq
\label{ode1}
y' = A y.
\eeq
Let $R > 0$ be so small that $\overline{B(0,R)} \subset V_1$, and define $V:=h(B(0,R))$.  Then $V$ is bounded and since $h(0) = 0$, there exists $L > 0$ such that $V \subset B(0,L)$.
Hence if $x(t) \in B(0,R)$ for all $t \in [0,T]$, then $y(t) \in B(0,L)$ for all $t \in [0,T]$.

Since the eigenvalues of $A$ are all distinct, the solution of \eqref{ode1} has the form
\[
y(t) = c_1 v_1 \exp \left((-\gamma-\delta i)t\right) + c_2 v_2 \exp \left((-\gamma+\delta i) t\right) + c_3 v_3 \exp \left((\gamma-\delta i) t\right) + c_4 v_4 \exp \left((-\gamma-\delta i) t\right),
\]
where $c_1, \dots, c_4$ are complex valued constants and $\{v_i\} \subset \C^4$ is a linearly independent set of eigenvectors of $A$. Letting $P = [v_1, v_2, v_3, v_4]$ be the matrix of eigenvectors of $A$, we write the above solution as
\beq
\label{rbound2}
y(t) = P [c_1 \exp \left((-\gamma-\delta i)t\right), c_2 \exp \left((-\gamma+\delta i)t\right), c_3 \exp \left((\gamma-\delta i)t\right), c_4 \exp \left((\gamma-\delta i)t\right)]^{\rm{Tr}},
\eeq
where the superscript $\rm Tr$ denotes the transpose of a matrix. Since $y(t) \in B(0,L)$ for all $t \in [0,T]$,
\[
\begin{aligned}
| [c_1 \exp \left((-\gamma-\delta i)t\right), c_2 \exp \left((-\gamma+\delta i)t\right), c_3 \exp \left((\gamma-\delta i)t\right), c_4 \exp \left((\gamma-\delta i)t\right)] |^2 &\le ||P^{-1}||^2 |y(t)|^2 \\
&\le L^2 ||P^{-1}||^2,
\end{aligned}
\]
where $||P^{-1}||$ is the operator norm of $P^{-1}$.
In particular,
\beq 
|c_1|^2  \le L^2 ||P^{-1}||^2 \exp \left(2\gamma t\right), \quad |c_2|^2 \le L^2 ||P^{-1}||^2 \exp \left(2\gamma t\right), 
\eeq
\beq 
|c_3|^2 \le L^2 ||P^{-1}||^2 \exp \left(-2\gamma t \right),  \quad |c_4|^2 \le L^2 ||P^{-1}||^2 \exp \left(-2\gamma t \right),
\eeq
for all $t \in [0,T]$.
Setting $t = 0$ and $t = T$ in the first and second row respectively we obtain bounds on the constants $c_1, ..., c_4$,
\beq \label{aichoice}
|c_1|  \le L ||P^{-1}||, \quad |c_2| \le L ||P^{-1}||, 
\eeq
\beq \label{aichoice2}
|c_3| \le L ||P^{-1}|| \exp \left(-\gamma T \right),  \quad |c_4| \le L ||P^{-1}|| \exp \left(-\gamma T \right).
\eeq
Using the resulting bounds in \eqref{rbound2} yields 
\[
\begin{aligned}
\exp \left(\lambda T \right) |y(t)|^2 &\le \exp \left(\lambda T \right) ||P||^2 \left(  |c_1|^2 \exp \left(-2\gamma t \right) + |c_2|^2 \exp \left(-2\gamma t \right) + |c_3|^2 \exp \left(2\gamma t \right) + |c_4|^2 \exp \left(2\gamma t \right) \right) \\
&\le 4 L^2 ||P||^2 ||P^{-1}||^2,
\end{aligned}
\]
provided 
\[
\lambda T - 2 \gamma t \le 0 \ \text{ and } \ \lambda T - 2\gamma T + 2\gamma t \le 0.
\]
Both of these conditions are satisfied as long as 
\[
t \in \left[ \frac{\la T}{2 \gamma}, T - \frac{\la T}{2 \gamma}  \right] =: [t_1,t_2].
\]
Hence for $t \in [t_1, t_2]$,
\[
|y(t)|^2 \le 4 L^2 ||P||^2 ||P^{-1}||^2 \exp \left(-\lambda T \right).
\]
In particular, if $T$ is sufficiently large (depending only on $\gamma, \delta$, and $V_2$), there exists a compact set $E$ such that $y(t) \in E \subset V_2$ for all $t \in [t_1, t_2]$.
Since $h^{-1}$ is $C^1$ and $h(0) = 0$, by the Mean Value Theorem,
\beq \label{finaleq}
|x(t)| =  |h^{-1} (y(t))| \le \sup_{s \in E} |\nabla h^{-1}(s)| |y(t)| \le C_{\gamma,\delta} \exp \left(-\lambda T/2 \right)
\eeq
for all $t \in [t_1, t_2]$, where $C_{\gamma,\delta} := L  \sup_{s \in E} |\nabla h^{-1}(s)|  ||P|| ||P^{-1}||$.
\end{proof}

For a given open interval $I$ and a subinterval $(y_1, y_2) \subset I$ we define
\beq
\mathcal{M} := \left\{ w \in H^2((y_1, y_2)): w(y_1) = 0, \,  \, w(y_2)= 0 \right\}.
\eeq

\begin{prop} \label{A3} 
Let $\eps_0 > 0$ and let $\hat{w}_\e$ be a global minimizer of $E_\eps(\cdot; (y_1,y_2))$  on $\mathcal{M}$ satisfying 
\beq
\label{hatWUB}
E_\eps(\hat{w}_\e; (y_1,y_2)) \le M,
\eeq
for all $\eps < \eps_0$.
Then $\hat{w}_\e$ solves the Euler--Lagrange equation 
\beq \label{ELEp}
2\eps^4 \hat{w}_\e^{(iv)} + 2q \eps^2 \hat{w}_\e'' + W'(\hat{w}_\e) = 0,
\eeq
with additional natural boundary conditions $\hat w_\e''(y_1) = \hat w_\e''(y_2) = 0$, and for all $\eps < \eps_0$ satisfies the estimates
\beq
\label{A3Res1}
{\rm dist}(  \hat{w}_\e((y_1 + y_2)/2) , \{\pm 1\}) \le C_M \exp \left(-{\frac{d \gamma}{2\eps}} \right),
\eeq
\beq
\label{A3Res2}
|  \hat{w}_\e^{(m)} ((y_1 + y_2)/2)  | \le C_M \exp \left(-{\frac{d \gamma}{2\eps}} \right), \quad m = 1, \dots 3,
\eeq
where $d := y_2 - y_1$ and $C_M > 0$ is a positive constant dependent only on $M, q$ and the potential $W$.
\end{prop}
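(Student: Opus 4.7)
The plan is to reduce the analysis to an application of Lemma \ref{LemmaODEclose} on a rescaled long interval, with global minimality used to ensure that the rescaled minimizer stays near a single equilibrium.

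First, I would derive \eqref{ELEp}: first variations of $E_\eps$ along test functions in $H^2((y_1, y_2))$ that vanish at the endpoints but whose first derivatives are free yield, via integration by parts, the interior fourth order equation together with the natural conditions $\hat w_\eps''(y_i) = 0$. Existence of a minimizer follows from the direct method, using Corollary \ref{inter} for coercivity. Next, rescale $v(y) := \hat w_\eps(y_1 + \eps y)$ on $[0, L]$ with $L := d/\eps$, so that $v$ satisfies $2v^{(iv)} + 2qv'' + W'(v) = 0$ and $E(v; (0, L)) = E_\eps(\hat w_\eps; (y_1, y_2)) \le M$. Writing this as the first--order system $x' = F(x)$ for $x := (v, v', v'', v''')$ and $F(x_1, \dots, x_4) = (x_2, x_3, x_4, -q x_3 - W'(x_1)/2)$, the equilibria are $x_0^{\pm} := (\pm 1, 0, 0, 0)$ and $DF(x_0^{\pm})$ has characteristic polynomial $\mu^4 + q\mu^2 + W''(\pm 1)/2 = 0$. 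Hypothesis \eqref{w4} combined with a Taylor expansion of $W$ at $\pm 1$ gives $W''(\pm 1) \geq 2c_W > 0$, so for $q$ small the discriminant is negative and the four eigenvalues take the form $\pm\gamma \pm \delta i$ with $\gamma > 0$, matching the hypothesis of Lemma \ref{LemmaODEclose}.

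The hard part, and the place where global minimality enters decisively, is producing an interval of length $L - O(1)$ on which $v$ remains in a Hartman--Grobman neighborhood of a single $x_0^{\pm}$. By Lemma \ref{2.2} applied to the rescaled $v$ with $\eps = 1$, the exceptional set $J$ on which $v$ is far from $\pm 1$ or $v'$ is not small has measure $\le C_1$, and $(0, L) \setminus J$ decomposes into a bounded number of ``phase regions'' where $v$ is close to $+1$ or $-1$. A one--bump test function in $\mathcal{M}$ (using the optimal $0 \to \pm 1$ profile of $m_{\pm}$ over length $O(\eps)$ near each endpoint, constant in the middle) is admissible with energy at most $m_1 + o(1)$, so by global minimality $E_\eps(\hat w_\eps) \le m_1 + o(1)$. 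On the other hand, if $v$ had $N \ge 2$ phase regions, a localized Modica--Mortola type lower bound would give $E_\eps(\hat w_\eps) \ge N m_1 - o(1)$, with the two boundary transitions $0 \to \pm 1$ each contributing $m_{\pm} = m_1/2$ and each interior transition between opposite phases contributing $m_1$; this contradicts the upper bound for $\eps$ small. Hence $N = 1$: after fixing orientation, $v$ is close to $+1$ on a single region extending from some $\tilde z_1 = O(1)$ to some $\tilde z_2 = L - O(1)$.

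With sign fixed, apply Lemma \ref{2.2} once more (with small $\delta < \delta_0$) to refine $\tilde z_1, \tilde z_2$ so that $|v - 1|, |v'| < \delta$ at both points. A standard cut--and--paste argument shows that $v|_{[\tilde z_1, \tilde z_2]}$ is itself a global minimizer of $E(\cdot; (\tilde z_1, \tilde z_2))$ over the class $\mathcal{M}^{+}_{\alpha, \beta}$ of Lemma \ref{A2} with boundary data of norm at most $C\delta$: any lower--energy competitor in $\mathcal{M}^{+}_{\alpha,\beta}$ could be pasted into $v$ while preserving $H^2$--regularity and admissibility in $\mathcal{M}$, contradicting minimality. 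Lemma \ref{A2} then provides uniform bounds $\|v - 1\|_{L^\infty} + \sum_{k=1}^3 \|v^{(k)}\|_{L^\infty} \le C\delta$ on $[\tilde z_1, \tilde z_2]$, so for $\delta$ small the orbit $x(t) := (v, v', v'', v''')(\tilde z_1 + t)$ lies in the Hartman--Grobman ball $B(x_0^+, R)$ for all $t \in [0, T]$, where $T := \tilde z_2 - \tilde z_1 = L + O(1)$. Lemma \ref{LemmaODEclose} then yields $|x(t) - x_0^+| \le C \exp(-\lambda T/2)$ for $t \in [\lambda T/(2\gamma), T - \lambda T/(2\gamma)]$; the target point $t^* := d/(2\eps) - \tilde z_1$ (corresponding to $(y_1 + y_2)/2$) differs from $T/2$ by only $O(1)$, so choosing $\lambda = \gamma(1 - O(\eps/d))$ places $t^*$ in the admissible range while $\lambda T/2 = \gamma d /(2\eps) - O(1)$. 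Unwinding the rescaling via $\hat w_\eps^{(k)}((y_1 + y_2)/2) = \eps^{-k} v^{(k)}(d/(2\eps))$ gives \eqref{A3Res1}--\eqref{A3Res2} up to polynomial prefactors $\eps^{-k}$, which are absorbed by taking $\gamma$ slightly below the eigenvalue real part (equivalently, by the choice of $\gamma$ in \eqref{defGamma}).
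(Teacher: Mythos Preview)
Your proposal follows the paper's strategy closely: Euler--Lagrange, rescale, use Lemma~\ref{2.2} to find good boundary points, establish that the minimizer stays near a single well, apply Lemma~\ref{A2} to get uniform $C^3$ closeness, then invoke Lemma~\ref{LemmaODEclose}. The one substantive difference is how you rule out multiple phase regions. The paper does this by an explicit competitor: if $\hat w_\eps$ were near $+1$ at $\tilde y_1$ and near $-1$ at $\tilde y_2$, it replaces the middle segment by a near--constant function $\phi$ (energy $O(\delta^2)$) and reflects the right piece to $-\hat w_\eps$, obtaining an admissible competitor whose energy is strictly smaller because the original middle segment must carry at least $\sigma\int_{-1/2}^{1/2}\sqrt{W(s)}\,ds > 0$. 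Your argument instead compares the upper bound $E_\eps(\hat w_\eps)\le m_1+o(1)$ from a one--bump test function against a lower bound $\ge 2m_1 - o(1)$ in the two--phase case. This is correct provided the lower bound is obtained by cut--and--paste against the definitions of $m_\pm, m_1$ (gluing optimal tails at the good points), not by the literal Modica--Mortola trick $2\sigma\int\sqrt{W}|v'|$, which only gives a non--sharp constant for this second--order functional. The paper's reflection argument is more elementary and avoids the sharp constant entirely; yours is conceptually cleaner once the $\Gamma$--liminf or cut--and--paste inequalities are in hand. Your explicit justification that $v|_{[\tilde z_1,\tilde z_2]}$ is itself a minimizer over $\mathcal{M}^+_{\alpha,\beta}$ is a point the paper asserts (``the resulting minimizer agrees with $\hat v$'') without argument, so you are more careful there.

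One loose end: your final sentence about absorbing the $\eps^{-k}$ prefactors ``by taking $\gamma$ slightly below the eigenvalue real part (equivalently, by the choice of $\gamma$ in \eqref{defGamma})'' is not right as written, since $\gamma$ in \eqref{defGamma} \emph{is} the eigenvalue real part. The paper sidesteps this by applying Lemma~\ref{LemmaODEclose} on the symmetric subinterval $(c,\,d/\eps - c)$, whose midpoint is exactly $d/(2\eps)$, so no $\lambda$--adjustment is needed; but it too silently passes from bounds on $\hat v^{(m)}$ to bounds on $\hat w_\eps^{(m)}$ without accounting for the $\eps^{-m}$ factor. In the downstream application (proof of Theorem~\ref{thm1}) only the rescaled quantities $\hat v_k^{(m)}$ are actually used, so this is a cosmetic issue with the statement of Proposition~\ref{A3} rather than a gap in your reasoning.
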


\begin{figure}[h]
\raggedleft 
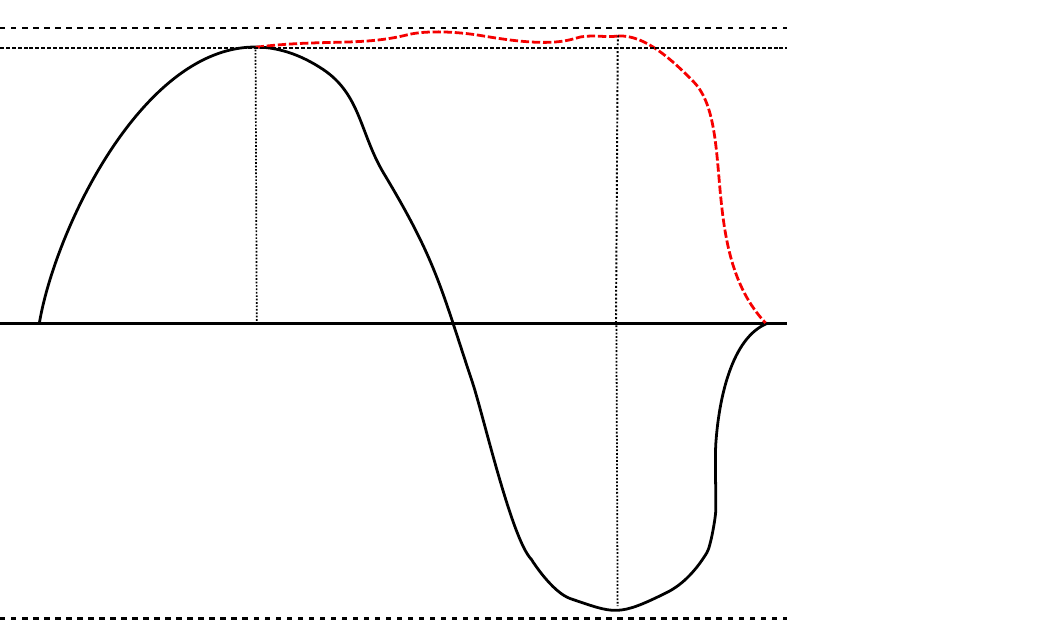 
\caption{The contradiction argument.}
\label{fig2}
\end{figure}

\begin{proof}
Fix $\delta > 0$ to be chosen later. 
We first observe that, due to the upper bound \eqref{hatWUB} and Lemma \ref{2.2}, there exists $c = c(\delta, M) > 0$ and  points $\tilde{y}_1 \in (y_1, y_1 + c\eps)$ and $\tilde{y}_2 \in (y_2 - c \eps, y_2)$ such that
\beq
\label{ux1}
\mbox{dist} (\hat{w}_\e(\tilde{y}_1) , \{\pm 1\}) < \delta, \quad |\eps \hat{w}'_\e(\tilde{y}_1)| < \delta,
\eeq
\beq
\label{ux2}
\mbox{dist}( \hat{w}_\e(\tilde{y}_2), \{ \pm 1 \}) < \delta, \quad |\eps \hat{w}'_\e(\tilde{y}_2)| < \delta.
\eeq
In addition, we claim that since $\hat{w}_\e$ is a minimizer, at $\tilde{y}_1$ and $\tilde{y}_2$ its value is near the same well of $W$, i.e., we may assume without loss of generality that
\beq
\label{ux3}
|\hat{w}_\e(\tilde{y}_1) - 1| < \delta, \quad |\hat{w}_\e(\tilde{y}_2) - 1| < \delta.
\eeq
As a matter of fact, if this was not the case and for example
\beq
\label{ux4}
|\hat{w}_\e(\tilde{y}_1) - 1| < \delta, \quad |\hat{w}_\e(\tilde{y}_2) + 1| < \delta.
\eeq
then consider
\[
g(x) := 
\begin{cases}
\hat{w}_\e(x), &y_1 \leq x \leq \tilde{y}_1, \\
\phi(x), &\tilde{y}_1 \leq x \leq \tilde{y}_2, \\
-\hat{w}_\e(x), &\tilde{y}_2 \leq x \leq y_2,
\end{cases}
\]
where
\beq \label{phiCompute}
\begin{aligned}
\phi(x) :=  1 &+ ( \hat{w}_\e(\tilde{y}_1) -1 )\vp_0(x - \tilde{y}_1) + \hat{w}'_\e(\tilde{y}_1)\vp_1(x - \tilde{y}_1)  \\
&+  ( -\hat{w}_\e(\tilde{y}_2) -1 )\vp_0(\tilde{y}_2 - x) + \hat{w}'_\e(\tilde{y}_2)\vp_1(\tilde{y}_2 - x), 
\end{aligned}
\eeq
and $\vp_0, \vp_1$ satisfy
\[
\vp_j \in C^{\infty}\left( \R \right), \ \vp_j(x) = 0 \mbox{ for all } x \geq (y_2 - y_1)/2,
\]
\[
\vp_0(0) = 1, \ \vp_0'(0) = 0, \ \vp_1(0) = 0, \ \vp_1'(0) = 1.
\]
It is easy to see that
\[
\phi(\tilde{y}_1) = \hat{w}_\e( \tilde{y}_1), \quad \phi'(\tilde{y}_1) = \hat{w}'_\e( \tilde{y}_1),
\]
\[
\phi(\tilde{y}_2) = -\hat{w}_\e( \tilde{y}_2), \quad \phi'(\tilde{y}_2) = -\hat{w}'_\e( \tilde{y}_2),
\]
and consequently $g \in H^2((y_1, y_2))$. Obtaining $\phi'$ from \eqref{phiCompute} and using \eqref{ux4}, we get
\[
||\phi'||_{L^\infty(\tilde y_1,\tilde y_2)}^2 \leq c (||\vp_0'||_{L^\infty(\R)}^2 + ||\vp_1'||_{L^\infty(\R)}^2) \delta^2, 
\]
where $c > 0$ is a constant and we notice that
\[
\int_{\tilde y_1}^{\tilde y_2} |\phi' |^2 dx \leq c (y_2 - y_1) (||\vp_0'||_{L^\infty(\R)}^2 + ||\vp_1'||_{L^\infty(\R)}^2) \delta^2.
\]
Similarly, an analogous bound for $\phi''$ can be derived. Additionally, using Taylor's formula for $W$ and the facts that $W(\pm 1) = W'(\pm 1)= 0$ and $W \in C^2(\mathbb{R})$, it follows that  
\beq
\label{enTest}
E_\e(\phi; (\tilde{y}_1, \tilde{y}_2)) \leq \xi_1 \delta^2, 
\eeq
where $\xi_1$ only depends on $y_1$ and $y_2$, which do not depend on $\delta$, while interpolation inequality of Corollary \ref{inter} yields for $\delta$ sufficiently small
\[
\begin{aligned}
 E_\e( \hat{w}_\e ; (\tilde{y}_1, \tilde{y}_2)) &= \int_{\tilde{y}_1}^{\tilde{y}_2} \left( \frac{1}{\eps}  W(\hat{w}_\e) - q \eps |\hat{w}_\e'|^2 + \eps^3 |\hat{w}_\e''|^2 \right) dx  \geq \sigma  \int_{\tilde{y}_1}^{\tilde{y}_2} \left( \frac{1}{\eps}  W(\hat{w}_\e) + \eps |\hat{w}_\e'|^2 \right) dx  \\
&\geq \sigma  \int_{\tilde{y}_1}^{\tilde{y}_2} \sqrt{W(\hat{w}_\e)} \hat{w}_\e' dx = \sigma \int_{\hat{w}_\e(\tilde{y}_1)}^{\hat{w}_\e(\tilde{y}_2)}  \sqrt{W(s)} ds  \geq  \sigma \int_{-\frac{1}{2}}^{\frac{1}{2}}  \sqrt{W(s)} ds =:  \sigma \xi_2> 0.
\end{aligned}
\]
In turn, from \eqref{enTest}, possibly choosing $\delta$ even smaller we get a contradiction with the fact that $\hat{w}_\e$ is a minimizer.

Since $\hat{w}_\e$ is a minimizer of $E_\eps(\cdot; (y_1, y_2))$, it follows from standard arguments that it satisfies the Euler--Lagrange equation \eqref{ELEp}.  
We change variables $z = \frac{x - y_1}{\eps}$ and define $\hat{v}(z) := \hat{w}_\e(x)$. Observe that
\beq
E_\eps(\hat{w}_\e; (y_1,y_2)) =  E(\hat{v} ; (0, d /\eps))
\eeq
and the rescaled minimizer $\hat{v}$ satisfies the Euler--Lagrange equation 
\beq \label{EL}
2\hat{v}^{(iv)}  + 2q\hat{v}'' + W'(\hat{v} ) = 0, \quad \hat{v}''(0) = \hat{v}''(d /\eps) = 0.
\eeq
We now apply Lemma \ref{A2} on the interval $\left( \frac{\tilde{y}_1 - y_1}{\eps},  \frac{\tilde{y}_2 - y_1}{\eps} \right)$ with
\beq
\alpha_0 := \hat{w}_\e(\tilde{y}_1) = \hat{v} \left( \frac{\tilde{y}_1 - y_1}{\eps} \right), \quad \alpha_1 := \eps \hat{w}_\e'(\tilde{y}_1) = \hat{v}' \left( \frac{\tilde{y}_1 - y_1}{\eps} \right),
\eeq
\beq
\beta_0 := \hat{w}_\e(\tilde{y}_2) = \hat{v} \left( \frac{\tilde{y}_2 - y_1}{\eps} \right), \quad \beta_1 := \eps \hat{w}_\e'(\tilde{y}_2) = \hat{v}'\left( \frac{\tilde{y}_2 - y_1}{\eps} \right).
\eeq
The resulting minimizer agrees with $\hat{v}$ on this interval and given $R > 0$, for $\delta$ sufficiently small the bounds  \eqref{ux1} and \eqref{ux2} imply that
\[  
\chi := [\hat{v}-1,\hat{v}',\hat{v}'',\hat{v}'''] \in B(0, R).
\]
Using the notation $\chi  = [\chi_1, \chi_2, \chi_3, \chi_4]$, we rewrite \eqref{EL} in the system form
\beq  \label{odesystem}
\chi' = F(\chi)
\eeq
where
\[
F(\chi) = \begin{bmatrix}
      \chi_2          \\[0.3em]
      \chi_3\\[0.3em]
     \chi_4 \\[0.3em]
	\ -\frac{1}{2} W'(\chi_1) - q \chi_2
     \end{bmatrix}
\]
and the Jacobian of $F$ at $0$ is given by
\[
DF(0) =  \left[ \begin{array}{cccc}
0 & 1 & 0 & 0 \\
0 & 0 & 1 & 0 \\
0 & 0 & 0 & 1 \\
- \frac{1}{2} W''(1) & 0 & -q & 0
\end{array} \right].
\]
The eigenvalues of $DF(0)$ are the roots of the characteristic polynomial
\[
2r^4 + 2qr^2 + W''(1) = 0.
\]
In particular,
\[
r^2 = \frac{-2q \pm \sqrt{ 4q^2 - 8W''(1)  } }{4},
\]
and since $q > 0$ is small, the expression under the square root is negative. We write
\[
\left\{
\begin{aligned}
r^2 = \frac{-2q + \sqrt{ 4q^2 - 8W''(1)  } }{4}, \\
r^2 = \frac{-2q - \sqrt{ 4q^2 - 8W''(1)  } }{4}
\end{aligned}
\right.
\] 
and let $r_1, r_2$ be the roots of the first equation, $r_3, r_4$ those of the second one. We recall that
\[
\sqrt{a + ib} = \pm (\gamma + i \delta),
\]
for
\[
\gamma = \sqrt{ \frac{a + \sqrt{a^2 + b^2} }{2}   }, \quad \delta = \text{sgn}(b) \sqrt{ \frac{-a + \sqrt{a^2 + b^2} }{2}   }.
\]
In the case of $r_1$, we write
\[
r_1 = \left( -\frac{q}{2} + i \frac{\sqrt{2W''(1) - q^2}}{2}   \right)^{1/2},
\]
and a simple calculation shows that
\beq \label{defGamma}
\gamma = \frac{1}{2} \left(  -q + \sqrt{2W''(1)}  \right)^{1/2}, \quad \delta = \frac{1}{2} \left( q + \sqrt{2W''(1)}  \right)^{1/2}.
\eeq
Similarly, one can show that
\beq \label{evalues}
\left\{
\begin{aligned}
r_1 &= \gamma + i \delta, \\
r_2 &= - r_1, \\
r_3 &= \gamma - i \delta, \\
r_4 &= -r_3,
\end{aligned}
\right.
\eeq
Applying Lemma \ref{LemmaODEclose} on the interval $\left(c, \frac{y_2 - y_1 - c \eps}{\eps} \right) \subset \left( \frac{\tilde{y}_1 - y_1}{\eps},  \frac{\tilde{y}_2 - y_1}{\eps} \right)$ yields
\beq 
\left|\varphi \left(\frac{y_2 - y_1}{2\eps} \right) \right| \le C(\gamma, \delta) \exp\left(-\gamma  \frac{y_2 - y_1 - 2 c\eps }{2\eps} \right) 
\le C(\gamma, \delta) \exp\left(\gamma c -\gamma  \frac{d}{2\eps} \right) 
\eeq
and  \eqref{A3Res1}, \eqref{A3Res2} follow from definition of $\varphi$ and the fact that
\beq
 \hat{w}\left(\frac{y_1 + y_2}{2} \right) = \hat{v}\left(\frac{y_2 - y_1}{2\eps} \right). 
\eeq
\end{proof}

%
%
%
%
%
\begin{proof}[Proof of Theorem \ref{thm1}.]
Without loss of generality we can assume that $N(v) \geq 2$
%
and define
\beq
\mathcal{M}_k := \{w \in H^2((x_k, x_{k+1})): w(x_k) = 0, \,  \, w(x_{k+1})= 0 \}. 
\eeq
We define $\hat w_k \in H^2((x_k, x_{k+1}))$, for $1 \le k \le N$, to be the minimizer of $E_\e(\cdot, (x_k, x_{k+1} ))$ over $\mathcal{M}_k$.
We also let $\hat w_0 := \hat w_N$.
In turn, $\hat w_k$ solves the Euler--Lagrange equation \eqref{ELEp} with
\[
\begin{aligned}
&\hat w_k(x_k) = \hat w_k(x_{k +1}) = 0.\\
\end{aligned}
\]
Define $d_k := x_{k+1} - x_k$ for $k = 1, \ldots, N(v)$ and
\[
I_k^{-}(x_k) := \left( x_k - \frac{d_{k-1}}{2}, x_k \right) \quad \text{ and } \quad I_k^{+}(x_k)  := \left(x_k, x_k + \frac{d_{k}}{2}  \right).
\]
\begin{figure}[h]
\raggedleft 
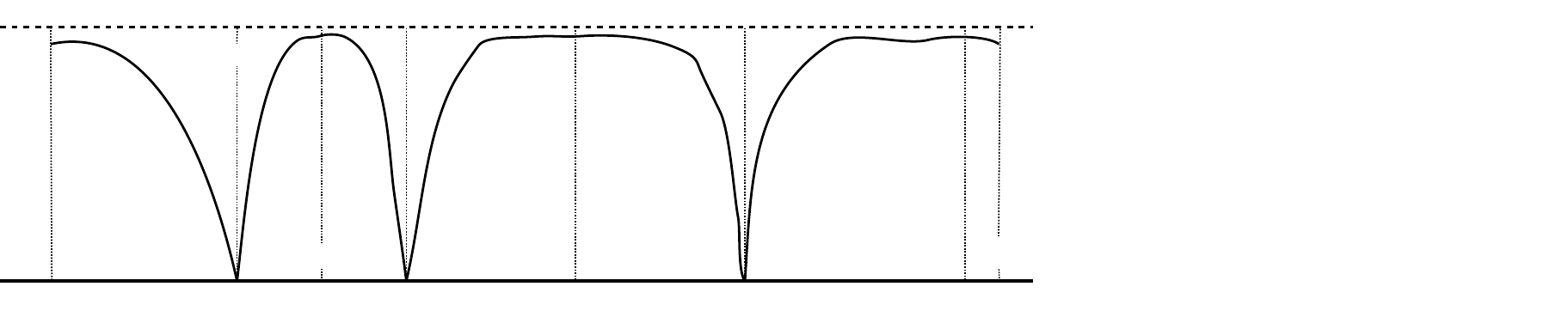 
\caption{$\hat{w}_k$ and $I^\pm_k$}
\label{fig3}
\end{figure}

From the minimality of $\hat w_k$, we have
\beq \label{energyDown}
\begin{aligned}
E_\e(w; \T) &= \sum_{k=1}^{N(v)} E_\e(w; (x_k, x_{k+1})) \geq \sum_{k=1}^{N(v)} E_\e(\hat w_k; (x_k, x_{k+1})) \\
&= \sum_{k=1}^{N(v)} E_\e(\hat w_{k-1}; I_k^{-}(x_k)) + E_\e(\hat w_k; I_k^{+}(x_k)),
\end{aligned}
\eeq
where in the last equality we have used the fact that $x_{N+1} := x_1$. To complete the proof, it remains to show that
\beq
E_\e(\hat w_{k-1}; I_k^{-}(x_k))  \ge \frac{m_1}{2} - C \exp \left(- \frac{d_{k-1} \gamma}{\e} \right)
\eeq
and
\beq
\label{EnILB}
E_\e(\hat w_{k}; I_k^{+}(x_k))  \ge \frac{m_1}{2} - C \exp \left(- \frac{d_k \gamma}{\e} \right).
\eeq
We will only prove \eqref{EnILB}, the proof of the first inequality being analogous.  Applying the change of variables $z := \frac{x - x_k}{\e}$ gives
\[
\begin{aligned}
E_\e(\hat w_k; I_k^{+}(x_k)) &= \int_{\frac{1}{\e}I_k^{+}(0)} \left( W(\hat w_k (x_k + \e z) ) - \e q |\hat w_k' (x_k + \e z) |^2 + \e^3 |\hat w_k'' (x_k + \e z)|^2   \right) \e dz \\
& = \int_{\frac{1}{\e}I_k^{+}(0)} \left( W(\hat v_k (z) ) - q |\hat v_k' (z) |^2 + |\hat v_k'' (z)|^2   \right) dz = E\left( \hat v_k;  \frac{1}{\e} I_k^{+}(0) \right),
\end{aligned}
\]
where $E(\cdot; \cdot)$ is the rescaled functional defined in \eqref{1DF} and 
\[
\hat v_k(z) := \hat{w}_{k}(x) \mbox{ on each } \frac{1}{\e} I_k^{+}(0).
\]
In addition, we notice that $\hat v_k(0) = \hat w_k(x_k) = 0$ for $1 \le k \le N$ and Proposition \ref{A3}, together with the change of variables we performed, gives
\beq \label{middleCondition}
| \hat v_k(d_k/2\e) - s_k |  = |  \hat{w}_k((x_k + x_{k+1})/2) - s_k|   \leq   C_f \exp\left(-\frac{d_k\gamma}{2\eps} \right)
\eeq
and
\beq \label{middleCondition2}
| \hat v_k' (d_k/2\e)  |  = |  \hat{w}_k'((x_k + x_{k+1})/2) |   \leq   C_f \exp\left(-\frac{d_k\gamma}{2\eps}\right),
\eeq
where $s_k$ is equal to either $1$ or $-1$.
We claim that
\beq \label{energyDown3}
E\left( \hat v_k;  \frac{1}{\e} I_k^{\e, +}(0) \right) \geq \frac{m_1}{2}  - E\left(\eta_k;  \R^+ \right)
\eeq
where
\beq
\begin{aligned}
\eta_k(x) := s_k &+ (\hat{v}_k(d_k/2\e) - s_k) \exp\left(-\gamma x\right) \cos (\delta x) \\
&+ \frac{\hat{v}_k'(d_k/2\e) + \gamma (\hat{v}_k(d_k/2\e) - s_k)}{\delta} \exp(-\gamma x) \sin(\delta x).
\end{aligned}
\eeq
Indeed, let $\theta_\e^{+} \in H_{loc}^2(\R^+)$ be the function that coincides with $\hat v_k$ on $\frac{1}{\e} I_k^{\e, +}(0)$ and $\eta_k^+ := \eta_k(\cdot - d_k/2\e)$ on $\mathbb{R}^+ \backslash \frac{1}{\e} I_k^{\e, +}(0)$.
Then,
\[
E(\theta_\e^{+}; \R^+) \geq m_1/2, 
\]
and in turn \eqref{energyDown3} follows.
We now want to find an upper bound for $E(\eta_k; \R^+)$, for $\e$ small enough. \\
The bounds \eqref{middleCondition}, \eqref{middleCondition2} and the definition of $\eta_k$ imply that there exists a constant $C > 0$ such that
\beq
 |\eta_k(x) - s_+|  +   |\eta_k'(x)| +  |\eta_k''(x)| \le C \exp\left( -\frac{d_k \gamma}{2\e} \right) \exp(-\gamma x) \quad \text{ for all } x > 0
\eeq
and consequently
\[
\begin{aligned}
E(\eta_k; \R^+) & = \int_{0}^{\infty} W(\eta_k) - q|\eta_k'|^2 + |\eta_k''|^2 dx \nonumber \\
& = \int_{0}^{\infty} \frac{W''(s_+)}{2} (\eta_k-s_+)^2 - q |\eta_k'|^2 + |\eta_k''|^2 + O((\eta^k-s_+)^3) dx \\
& \le C  \exp\left(-\frac{d_k \gamma}{\e} \right) \int_0^\infty \exp(-2\gamma x) dx \le C  \exp\left(- \frac{d_k \gamma}{\e} \right).
\end{aligned}
\]
\end{proof}

\section{Slow Motion Dynamics: Proof of Theorem \ref{exponentialslowmotion}}

\begin{proof}[Proof of Theorem \ref{exponentialslowmotion}]
Fix $0 < \delta < \min \{1, d/8 \}$. We recall that by definition of $E_{\eps}$
\[
E_{\eps}(u^\e(\cdot,t); \T) = \int_\T \left( \frac{1}{\eps} W(u^\e) - \eps q |u^\e_x|^2 + \eps^3 |u^\e_{xx}|^2  \right) dx.
\]
Integrating by parts and using the regularity of the solution $u^\e$ and equation \eqref{PDE} gives
\[
\begin{aligned}
\frac{d}{dt} E_{\eps}(u^\e(\cdot,t); \T) &=  \int_\T \left( \frac{1}{\eps} W'(u^\e) u^\e_t - 2\eps q u^\e_x u^\e_{xt} + 2\eps^3 u^\e_{xx} u^\e_{xxt}  \right) dx \\
&= \int_\T \left( \frac{1}{\eps} W'(u^\e) u^\e_t + 2\eps q u_{xx}u^\e_{t} + 2\eps^3 u^\e_{xxxx} u^\e_{t}  \right) dx \\
& = - \int_\T |u^\e_t|^2 dx.
\end{aligned}
\]
It follows that for every $T > 0$,
\beq \label{time}
E_{\eps}(u_{0,\e}; \T) - E_{\eps}(u^\e(\cdot,T); \T)= \frac{1}{\eps} \int_0^T \int_\T |u^\e_t|^2 dxdt.
\eeq
Suppose there exists $T_\e$ such that
\beq
\label{utUB}
\int_0^{T_\e} \int_\T |u^\e_t| dxdt \le \delta
\eeq 
Then,
\beq
\int_\T |u_{0,\e} - u^\e(\cdot, T_\e)| dx = \int_\T \left| \int_0^{T_\e} u^\e_t dt \right| dx \le  \int_\T \int_0^{T_\e} |u^\e_t |dt  dx \le \delta
\eeq
and using \eqref{h1} and the triangle inequality
\beq
\label{uTtoV}
||u^\e(\cdot, T_\e) - v||_{L^1(\T)} \le 2\delta .
\eeq
We claim that $u^\e(\cdot, T_\e)$ has at least $N_\e$ zeros, $\{x_k^\e\}_{k=1}^{N_\e}$ that satisfy $\min_k |x_{k+1}^\e - x_k^\e| \ge d - 4\delta$.

Indeed, consider $x_k$, the $k$--th jump point of $v$.  Since the distance between jump points of $v$ is at least $d$ and $\delta \le d/8$, we know that $v$ is constant on $(x_k -2\delta, x_k)$ and on $(x_k, x_k+2\delta)$ and may assume without loss of generality that its value is equal to $1$ on $(x_k -2\delta, x_k)$ and to $-1$ on $(x_k, x_k+2\delta)$.  It follows from \eqref{uTtoV} that $u^\e(\cdot, T_\e)$ must take a positive value somewhere on $(x_k -2\delta, x_k)$  and a negative value on $(x_k, x_k+2\delta)$.  Hence, there exists a zero $x_k^\e \in (x_k-2\delta, x_k+2\delta)$ of  $u^\e(\cdot, T_\e)$.

Applying H\"older inequality, \eqref{h1}, \eqref{time}, and Theorem \ref{thm1} yields
\beq \label{EquationT}
\begin{aligned}
\frac{1}{T_\e} \left( \int_0^{T_\e} \int_\T |u^\e_t| dxdt  \right)^2 &\leq
\int_0^{T_\e} \int_\T |u^\e_t|^2 dxdt \\
&= \eps \left( E_{\eps}(u_{0,\e}; \T)- E_{\eps}(u^\e(\cdot,T_\e); \T) \right) \\
&\leq \eps \left( E_0(v; \T) + \frac{1}{h(\eps)} - m_1 N_\e + C \sum_{k=1}^{N_\e} \text{exp} \left(- \frac{(x_{k+1}^\e - x_k^\e) \gamma}{ \eps} \right)  \right) \\
&\leq \eps \left( E_0(v; \T) + \frac{1}{h(\eps)} - E_0(v; \T) +  C \text{exp} \left(- \frac{(d - 4\delta) \gamma}{\eps}  \right) \right) \\
&= \e \left( \frac{1}{h(\eps)} + C\text{exp} \left(- \frac{(d - 4\delta) \gamma}{\eps}  \right)  \right)
\end{aligned}
\eeq
and as a consequence,
\beq \label{34}
T_\e \geq \frac{1}{C \eps} \left[ \frac{1}{h(\eps)} + \exp\left(- (d - 4\delta)\gamma/ \eps \right)    \right]^{-1} \left(  \int_0^{T_\e} \int_\T |u^\e_t| dxdt \right)^2.
\eeq
Following the ideas of \cite{Grant}, we prove the existence of $T_\e$ as in \eqref{utUB} by dividing the analysis into two cases: first assume that
\[
\int_0^{\infty} \int_\T |u^\e_t| dxdt > \delta.
\]
Since by \eqref{time} with $T$ replaced by any $S > 0$,
\[
\int_0^S \int_\T |u^\e_t|^2 dxdt	\leq \e E_{\eps}(u_{0,\e}; \T) < \infty
\]
we can choose $T_\e$ such that
\beq \label{Tchoice}
\int_0^{T_\e} \int_\T |u^\e_t| dxdt = \delta,
\eeq 
and thanks to \eqref{Tchoice}, equation \eqref{34} gives
\[
T_\e \geq \frac{\delta^2}{ C \eps \left[ \frac{1}{h(\eps)} + \text{exp} \left(- (d - 4\delta)\gamma/ \eps \right)    \right]  } \geq \frac{\delta^2}{ 2 C \eps} \min\{ h(\eps), \text{exp}((d - 4\delta) \gamma/\eps)  \} =: \Lambda_\e.
\]
In turn, \eqref{utUB} is satisfied and \eqref{EquationT} yields
\beq \label{35}
\int_0^{\Lambda_\e} \int_\T |u^\e_t|^2 dxdt \leq C \eps \left[\frac{1}{h(\eps)} + \text{exp} \left(- (d - 4\delta)\gamma/ \eps \right)   \right].
\eeq
On the other hand, if
\[
\int_0^{\infty} \int_\T |u^\e_t| dxdt \leq \delta,
\]
then \eqref{utUB} holds true for all $T > 0$ and again \eqref{35} follows.
To conclude the proof note that for $\e$ sufficiently small 
\[
s_\e := \delta^2 \min\left\{ h(\eps), \text{exp} ((d - 4\delta) \gamma/\eps)   \right\} \le \Lambda_\e
\]
and H\"older's inequality together with \eqref{35} yield
\[
\begin{aligned}
&\sup_{0 \leq t \leq s_\e  } \int_\T  | u^\e(x,t)  - u_{0,\e}(x)  | dx \leq \int_0^{s_\e}  \int_\T  | u^\e_t| dxdt \\
&\leq \left( \min\left\{ h(\eps), \exp\left( \frac{(d - 4 \delta) \gamma}{\eps} \right) \right\}  \int_0^{s_\e }  \int_\T  | u^\e_t|^2 dxdt     \right)^{1/2} \\
&\leq C \left(   \min\left\{ h(\eps), \exp\left( \frac{(d - 4  \delta) \gamma}{\eps} \right) \right\} \eps \delta^2 \left[\frac{1}{h(\eps)} + \exp  \left(- \frac{(d - 4 \delta)\gamma}{\eps} \right) \right]    \right)^{1/2} \\
&\leq C \sqrt{\eps} \delta.
\end{aligned}
\]
Letting $\eps \to 0^+$ gives \eqref{31}.
\end{proof}

\section{Existence of Solutions Via Minimizing Movements}

We now turn to the existence and regularity of solutions for \eqref{PDE} in the more general case of an open, bounded domain $\Omega \subset \R^d$. We notice that the same proof carries over in the case of the one--dimensional torus $\Omega = \T$, that is, when we deal with periodic Dirichlet boundary conditions, which is the framework in which we have analyzed slow motion of solutions of \eqref{PDE}.
%

\begin{thm} \label{ExistenceTheorem}
Let $\Omega \subset \R^d$, $d \leq 3$, be an open bounded set with $C^2$ boundary, let $u_0 \in H^2(\Omega)$ and the real valued function $z \mapsto W(z)$ be a double--well potential satisfying hypotheses \eqref{w1}--\eqref{w4}.  
Then for every $T > 0$ there exists a weak solution $u^\e \in L^{\infty}((0,T); H^2(\Omega))$ in the sense of \eqref{ex18}, with $u^\e_t  \in L^2((0,T); L^2(\Omega))$ of
\beq \label{mainPDE}
\begin{cases}
u_t = -\frac{1}{\eps} W'(u) - 2 \eps q \Delta u - 2 \eps^3 \Delta^2 u & \text{in} \ \Omega \times (0,T), \\
\ \ u(x,0) = u_0(x) & \text{in} \ \Omega, 
\end{cases}
\eeq
such that
\[
\int_{\Omega} u(x,t) dx = \int_{\Omega} u_0(x) dx + \int_0^t \int_{\Omega}  \frac{1}{\eps} W'(u(x,s)) dx ds.
\]
Moreover, the following estimates hold
\[
\begin{aligned}
\int_0^T \int_{\Omega} | u_t (x,t) |^2 dxdt &\leq M_\e \sigma^{-1}, \\
\int_{\Omega} | \nabla u (x,t) |^2 dx &\leq 3M_\e \sigma^{-1}, \\
\int_{\Omega} | \nabla^2 u (x,t) |^2 dx &\leq 3M_\e \sigma^{-1},
\end{aligned}
\]
for $\mathcal{L}^1$ a.e. $t \in (0,T)$, where $\sigma \in (0,1)$ and  
\beq \label{starMM}
M_\e := 2\int_{\Omega} \left( \frac{1}{\eps} W(u_0) +\eps  |\nabla u_0|^2 + \eps^3 |\nabla^2u_0|^2   \right) dx.
\eeq
\end{thm}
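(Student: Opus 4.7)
The plan is to apply De Giorgi's Minimizing Movements scheme to construct a weak solution. Fix $T>0$, a time step $\tau>0$ with $N\tau=T$, and set $u_0^\tau:=u_0$. Recursively define $u_{n+1}^\tau$ to be a minimizer of
\[
F_n(u) := E_\eps(u;\Omega) + \frac{1}{2\tau}\|u - u_n^\tau\|_{L^2(\Omega)}^2
\]
over $H^2(\Omega)$. The existence of each minimizer follows from the direct method: by Lemma \ref{interpN} and the argument of Corollary \ref{inter}, for $q$ sufficiently small $E_\eps$ controls a nonnegative multiple of $\int_\Omega (W(u) + \eps^2|\nabla u|^2 + \eps^4|\nabla^2 u|^2)\,dx$, so $F_n$ is coercive and weakly lower semicontinuous on $H^2(\Omega)$.

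For the a priori estimates, minimality yields the discrete energy inequality
\[
E_\eps(u_{n+1}^\tau;\Omega) + \frac{1}{2\tau}\|u_{n+1}^\tau - u_n^\tau\|_{L^2}^2 \leq E_\eps(u_n^\tau;\Omega).
\]
Summing over $n$ gives uniform (in $\tau$) bounds on $\sup_n \|u_n^\tau\|_{H^2(\Omega)}$ and on $\sum_n \tau\,\|(u_{n+1}^\tau-u_n^\tau)/\tau\|_{L^2}^2$. Introduce the piecewise constant interpolant $\bar u^\tau(t):=u_{n+1}^\tau$ on $(n\tau,(n+1)\tau]$ and the piecewise affine interpolant $\hat u^\tau$ joining consecutive $u_n^\tau$ linearly, so that $\|\bar u^\tau\|_{L^\infty(0,T;H^2)} + \|\hat u^\tau_t\|_{L^2(0,T;L^2)}\leq C$ with $C$ independent of $\tau$, and $\|\hat u^\tau - \bar u^\tau\|_{L^2(0,T;L^2)}\leq C\sqrt{\tau}$.

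Next I would pass to the limit. Banach--Alaoglu on $L^\infty(0,T;H^2)$ and the Aubin--Lions embedding $L^\infty(0,T;H^2)\cap H^1(0,T;L^2)\hookrightarrow\hookrightarrow C([0,T];L^2)$ give a subsequence converging to $u^\eps\in L^\infty(0,T;H^2(\Omega))$ with $u^\eps_t\in L^2(0,T;L^2(\Omega))$. Since $d\leq 3$, the embedding $H^2(\Omega)\hookrightarrow C(\bar\Omega)$ furnishes a uniform $L^\infty$ bound, so by continuity of $W'$ and dominated convergence $W'(\bar u^\tau)\to W'(u^\eps)$ strongly in $L^2((0,T)\times\Omega)$. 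The Euler--Lagrange equation for $u_{n+1}^\tau$, in weak form tested against $\varphi\in H^2(\Omega)$, reads
\[
\int_\Omega \frac{u_{n+1}^\tau - u_n^\tau}{\tau}\varphi\,dx = -\int_\Omega\!\Big(\tfrac{1}{\eps}W'(u_{n+1}^\tau)\varphi - 2\eps q\,\nabla u_{n+1}^\tau\cdot\nabla\varphi + 2\eps^3\Delta u_{n+1}^\tau\,\Delta\varphi\Big)dx.
\]
Summing in $n$ and letting $\tau\to 0^+$ yields the weak formulation of \eqref{mainPDE}. The claimed estimates for $u^\eps_t$, $\nabla u^\eps$, $\nabla^2 u^\eps$ and the initial condition follow by weak lower semicontinuity applied to the telescoped energy inequality, together with the interpolation inequality which converts the energy bound $E_\eps(u_0)=:M_\eps/2$ into bounds of the form $\sigma\int(\eps|\nabla u^\eps|^2+\eps^3|\nabla^2 u^\eps|^2)\,dx\leq M_\eps$.

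The main obstacle will be \emph{coercivity}: because $E_\eps$ contains the negative term $-\eps q\int|\nabla u|^2$, the discrete energy inequality does not directly yield $H^2$ bounds, and a naive iteration could lose a factor at each step of the scheme. The remedy is to apply Lemma \ref{interpN} (with constants independent of the time step) to replace $E_\eps$ by an equivalent coercive quantity once and for all, so that the estimates propagate uniformly in both $n$ and $\tau$. A secondary technical issue is ensuring that the minimizer satisfies the full fourth-order Euler--Lagrange equation tested against every $H^2$ function (no natural boundary obstruction arises on $\T$, and on $\Omega\subset\R^d$ it is handled by standard variations supported away from $\partial\Omega$ together with the $C^2$ regularity of $\partial\Omega$ for the biharmonic identity).
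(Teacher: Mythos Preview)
Your proposal is correct and follows essentially the same minimizing-movements approach as the paper: discrete variational scheme, coercivity via the interpolation Lemma~\ref{interpN}, telescoped energy inequality for uniform $H^2$ and $L^2_t$ bounds, Aubin--Lions compactness, and passage to the limit in the Euler--Lagrange equation. One minor slip: since the energy contains $|\nabla^2 u|^2$ rather than $|\Delta u|^2$, the weak Euler--Lagrange equation should carry the Fr\"obenius term $2\eps^3\,\nabla^2 u_{n+1}^\tau : \nabla^2\varphi$ (as in the paper's \eqref{ex18}) rather than $2\eps^3\,\Delta u_{n+1}^\tau\,\Delta\varphi$.
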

%

\begin{proof}
{\bf Step 1.} For $\ell \in \N$ we set $\tau := T/\ell$ and subdivide the interval $(0,T)$ into $\ell$ subintervals of length $\tau$,
\[
\tau_0 := 0 < \tau_1 < \ldots < \tau_{\ell} := T,
\]
where $\tau_n := n \tau$ for $n = 1, \ldots, \ell$. For every $n = 1, \ldots, \ell$, we let $u_n \in H^2(\Omega)$ be a solution of the minimization problem
\[
\min_{v \in H^2(\Omega)} J_{\e,n}(v;\Omega),
\]
where
\[
\begin{aligned}
J_{\e,n}(v;\Omega) &:= \int_{\Omega} \left( \frac{1}{\eps} W(v) - \eps q |\nabla v|^2 + \eps^3 |\nabla^2v|^2   \right) dx + \frac{1}{2\tau} \int_{\Omega} (v - u_{n-1})^2 dx \\
& = E_{\eps}(v; \Omega) + \frac{1}{2\tau} \int_{\Omega} (v - u_{n-1})^2 dx.
\end{aligned}
\]
In order to prove the existence of $u_n$, we begin by showing that $J_n$ is non--negative and coercive in $H^2(\Omega)$. We fix $q^* > 0$ such that the interpolation inequality Lemma \ref{interpN} holds in $\Omega$, namely
\[
k\eps^2 \int_{\Omega} |\nabla u |^2 dx \leq \int_{\Omega}\left[ W(u) + \eps^4 |\nabla^2 u|^2   \right] dx, \ -\infty < k \leq q^*,
\]
and we let $\sigma \in (0,1)$ be such that $ (q + \sigma)/(1 - \sigma) < q^*$, so that we can write
\beq \label{ex1}
\begin{aligned}
W(u) - q^2 \eps^2 |\nabla u|^2 + \eps^4 |\nabla^2 u|^2 &= 
(1 - \sigma) \left( W(u) - \frac{q + \sigma}{1 - \sigma}   \eps^2  |\nabla u |^2 + \eps^4 |\nabla^2u|^2 \right) \\
&+ \sigma (W(u) + \eps^2  |\nabla u |^2 + \eps^4 |\nabla^2 u|^2  ),
\end{aligned}
\eeq
and in turn $J_{\e,n}$ is non--negative. 
Then by \eqref{w4}, and using the fact that $c_W \leq 1$, we obtain
\beq \label{STARex} 
\begin{aligned}
E_{\eps}(u; \Omega) &\geq \sigma c_W \int_{\Omega} \left( (|u| - 1)^{2} + \eps^2 |\nabla u|^2 + \eps^4 |\nabla^2 u|^2   \right) dx.
\end{aligned}
\eeq
The above chain of inequalities implies that
\[
J_{\e,n}(u; \Omega) = E_{\eps}(u; \Omega) + \frac{1}{2\tau} \int_{\Omega} (v - u_{n-1})^2 dx \to \infty  \quad \text{as }  ||u||_{H^2(\Omega)} \to \infty,
\]
and hence $J_{\eps}$ is coercive in $H^2(\Omega)$.

We now let $\displaystyle m_n := \inf_{v \in H^2(\Omega)} J_{\e,n}(v;\Omega)$, and consider a minimizing sequence $\{ v_k \} \subset H^2(\Omega)$ satisfying
\[
m_n \leq J_{\e,n} (v_k;\Omega) \leq m_n + \frac{1}{k},
\]
so that
\[
\lim_{k \to \infty} J_{\e,n}(v_k;\Omega) = m_n.
\]
It follows from \eqref{STARex} that $\{ v_k \}$ is bounded in $H^2(\Omega)$, and hence there exist a subsequence of $\{ v_k \}$ (not relabeled) and some $u_n \in H^2(\Omega)$ such that
\[
\begin{aligned}
v_k \to u_n \quad &\text{in} \ L^2(\Omega), \\
v_k \to u_n \quad &\text{pointwise a.e. in } \Omega, \\
\nabla v_k \to \nabla u_n  \quad &\text{in} \ L^2(\Omega), \\
\nabla^2 v_k \weak \nabla^2 u_n  \quad &\text{in} \ L^2(\Omega).
\end{aligned}
\]
We claim that the above convergences imply that $J_{\e,n}(u_n;\Omega) = m_n$. Indeed, by Fatou's Lemma and lower semicontinuity of $L^2$ norm with respect to weak convergence, we have
\[
m_n = \liminf_{k \to \infty} J_{\e,n}(v_k;\Omega) \geq J_n(u_n) \geq m_n.
\]
It follows that for all $w \in H^2(\Omega)$ and all $t \in \R$,
\[
J_{\e,n}(u_n;\Omega) \leq J_{\e,n}(u_n + t w;\Omega),
\]
and hence the real valued function $\omega(t) := J_{\e,n}(u_n + t w;\Omega)$ has a minimum at $t=0$, so that $\omega'(0) = 0$. Standard arguments show that for every $w \in H^2(\Omega)$,
\beq \label{ex5}
\begin{aligned}
0 &= \int_{\Omega} \left( \frac{1}{\eps} W'(u_n) w - 2\eps q \nabla u_n \cdot \nabla w + 2\eps^3 \nabla^2u_n \cdot \nabla^2 w    \right) \\
&+ \frac{1}{\tau} \int_{\Omega} (u_n - u_{n-1})w,
\end{aligned}
\eeq
where $W'(u_n) w$ is well--defined by the embedding of $H^2(\Omega)$ into $L^\infty (\Omega)$ for $d \leq 3$, and $ \nabla^2u_n \cdot \nabla^2 w = \sum_{i,j} \frac{\partial^2 u_n}{\partial x_i \partial x_j} \frac{\partial^2 w}{\partial x_i \partial x_j}$ is the Fr\"obenius inner product. In particular, this shows that $u_n$ is a weak solution of the equation
\[
-\frac{1}{\eps} W'(u_n) - 2\eps q \Delta u_n - 2 \eps^3 \Delta^2 u_n = \frac{1}{\tau} (u_n - u_{n-1}) \quad \text{in} \ \Omega.
\]
Since $\Omega$ has finite measure, choosing $w =1$ in \eqref{ex5} gives
\[
0 = \int_{\Omega} \frac{1}{\eps} W'(u_n) dx + \frac{1}{\tau} \int_\Omega (u_n - u_{n-1}) dx.
\]
{\bf Step 2: Apriori bounds.} For $x \in \Omega$ and $t \in (\tau_{n-1}, \tau_n]$, $n= 1, \ldots, \ell$, we define
\beq \label{ex6}
u^{\tau}(x,t) := u_n(x) + (t - \tau_n) \frac{ u_n(x) - u_{n-1}(x) }{\tau}.
\eeq
The goal of this step is to find apriori bounds on $u^{\tau}$. 

Since $J_{\e,n}(u_n;\Omega) = m_n$, it follows that $J_{\e,n}(u_n;\Omega) \leq J_{\e,n}(u_{n-1};\Omega)$, which implies
\[
\begin{aligned}
\frac{1}{2\tau} \int_{\Omega} (u_n - u_{n-1})^2 dx &\leq  \int_{\Omega} \left( \frac{1}{\eps} (W(u_{n-1}) - W(u_n)) - \eps q (|\nabla u_{n-1}|^2 - |\nabla u_n|^2 ) \right) dx \\
& + \int_{\Omega} \eps^3( |\nabla^2u_{n-1}|^2 - |\nabla^2u_n|^2 ) dx.
\end{aligned}
\]
Summing over $n = 1, \ldots, \ell$, we get
\beq \label{beforeEx7}
\begin{aligned}
\frac{1}{2\tau} \sum_{n=1}^{\ell} \int_{\Omega} (u_n - u_{n-1})^2 dx &\leq \int_{\Omega} \left( \frac{1}{\eps} (W(u_{0}) - W(u_{\ell})) - \eps q (|\nabla u_{0}|^2 - |\nabla u_{\ell}|^2 ) \right) dx \\
& + \int_{\Omega} \eps^3( |\nabla^2u_{0}|^2 - |\nabla^2u_{\ell}|^2 ) dx.
\end{aligned}
\eeq
By the interpolation inequality in Lemma \ref{interpN},
\[
\int_{\Omega} \left( \frac{1}{\eps} W(u_{\ell}) - \eps q |\nabla u_{\ell}|^2 + \eps^3 |\nabla^2u_{\ell}|^2   \right) dx \geq \sigma \int_{\Omega} \left( \frac{1}{\eps} W(u_{\ell}) +\eps  |\nabla u_{\ell}|^2 + \eps^3 |\nabla^2u_{\ell}|^2   \right) dx,
\]
where $\sigma \in (0,1)$ was chosen above. Thus, the previous inequalities imply
\beq \label{ex7}
\begin{aligned}
\frac{1}{2\tau} \sum_{n=1}^{\ell} \int_{\Omega} (u_n - u_{n-1})^2 \ dx &+  \sigma \int_{\Omega} \left( \frac{1}{\eps} W(u_{\ell}) +\eps  |\nabla u_{\ell}|^2 + \eps^3 |\nabla^2u_{\ell}|^2   \right) dx \\
&\leq  \int_{\Omega} \left( \frac{1}{\eps} W(u_0) +\eps  |\nabla u_0|^2 + \eps^3 |\nabla^2u_0|^2   \right) dx = \frac{M_\e}{2},
\end{aligned}
\eeq
see \eqref{starMM}. By \eqref{ex6}, for every $x \in \Omega$ and $t \in (\tau_{n-1}, \tau_n]$,
\beq \label{ex8}
\begin{aligned}
u^{\tau}_t(x,t) &= \frac{ u_n(x) - u_{n-1}(x) }{\tau}, \\
\nabla u^{\tau}(x,t) &= \nabla u_n(x) + (t - \tau_n) \frac{ \nabla u_n(x) - \nabla u_{n-1}(x) }{\tau}, \\
\nabla^2 u^{\tau}(x,t) &= \nabla^2 u_n(x) + (t - \tau_n) \frac{ \nabla^2 u_n(x) - \nabla^2 u_{n-1}(x) }{\tau},
\end{aligned}
\eeq
so that by \eqref{ex7} we have
\beq \label{ex9}
\frac{1}{2} \int_{\Omega_T} \left( u^{\tau}_t(x,t)  \right)^2 dxdt +  \sigma \int_{\Omega} \left( \frac{1}{\eps} W(u_{\ell}) +\eps  |\nabla u_{\ell}|^2 + \eps^3 |\nabla^2u_{\ell}|^2   \right) dx \leq \frac{M_\e}{2}
\eeq
which implies
\beq \label{ex10}
\int_{\Omega_T} \left( u^{\tau}_t(x,t)  \right)^2 dxdt \leq M_\e,
\eeq
for every $\tau > 0$. Since $u^{\tau}$ is absolutely continuous, for every $0 \leq t_1 < t_2 \leq T$,
\beq \label{exb}
\begin{aligned}
\int_{\Omega} \left( u^{\tau}(x,t_2) - u^{\tau}(x,t_1)  \right)^2 dx &= \int_{\Omega}  \left( \int_{t_1}^{t_2} u^{\tau}_t(x,t) dt \right)^2 dx \\
&\leq (t_2-t_1) \int_{\Omega_T} \left(  u^{\tau}_t(x,t) \right)^2 dxdt \\
&\leq M_\e(t_2-t_1). 
\end{aligned}
\eeq
Taking $t_1 = 0$ and noticing that $u^{\tau} (x,0) = u_0(x)$, we get
\beq \label{exa}
\int_{\Omega} \left( u^{\tau}(x,t) - u_0  \right)^2 dx \leq M_\e t
\eeq
for every $\tau > 0$ and all $t \in (0,T)$. In turn, by convexity of the function $z \mapsto z^2$,
\beq \label{ex11}
\int_{\Omega} \left( u^{\tau}(x,t) \right)^2 dx \leq 2M_\e t + 2\int_{\Omega} u_0^2(x) dx
\eeq
for every $\tau > 0$ and all $t \in (0,T)$.

Moreover, by \eqref{ex8}, for $x \in \Omega$ and $t \in (\tau_{n-1}, \tau_n ] $,
\[
\begin{aligned}
| \nabla u^{\tau} (x,t) | &\leq 2| \nabla u_n(x) | + |\nabla u_{n-1}(x)|, \\
| \nabla^2 u^{\tau} (x,t) | &\leq 2| \nabla^2 u_n(x) | + |\nabla^2 u_{n-1}(x)|,
\end{aligned}
\]
and by \eqref{ex7} and arbitrariness of $\ell$ we get
\beq \label{ex12}
\int_{\Omega} | \nabla u^{\tau} (x,t) |^2 dx \leq\frac{3M_\e}{\sigma}, \quad \int_{\Omega} | \nabla^2 u^{\tau} (x,t) |^2 dx \leq \frac{3M_\e}{\sigma}.
\eeq

{\bf Step 3: Convergence as $\tau \to 0^+$.} In the previous step we have shown that $\{ u^{\tau} \}$ is bounded in $L^2((0,T); H^2(\Omega))$ and $\{ u_t^{\tau} \}$ is bounded in $L^2((0,T); L^2(\Omega))$. Since these spaces are reflexive, there exist a subsequence of $\{ u^{\tau} \}$ (not relabeled) and $u$ such that $u^{\tau} \weak u$ in $L^2((0,T); H^2(\Omega))$ and in $H^1((0,T); L^2(\Omega))$. Using the fact that the embeddings $H^2(\Omega) \hookrightarrow H^1(\Omega)$ and $H^1(\Omega) \hookrightarrow L^2(\Omega)$ are compact, it follows by the compactness theorem of Aubin and Lions (see e.g. \cite{Aubin}) and a diagonal argument, that, up to a further subsequence, $u^{\tau} \to u$ in $L^2((0,T); L^2(\Omega))$. In turn, for $\mathcal{L}^1$ a.e. $t \in (0,T)$ we have that $u^{\tau}(\cdot, t) \to u(\cdot, t)$ in $L^2(\Omega)$. We are now ready to let $\ell \to \infty$, or equivalently, $\tau \to 0^+$ in \eqref{ex10}, \eqref{exa}, \eqref{ex12}, and deduce the corresponding apriori bounds. \\

\noindent {\bf Step 4: $u$ is a weak solution of the Swift--Hohenberg equation.}\\
We let $x \in \Omega$ and $t \in (\tau_{n-1}, \tau_{n})$, $n= 1, \ldots, \ell$, and define
\beq \label{ex13}
\tilde{u}^{\tau} (x,t) := u_n(x).
\eeq
We claim that $\tilde{u}^{\tau} \weak u$ in $L^2( (0,T); H^2(\Omega)  )$ as $\tau \to 0^+$. 

Given $t \in (0,T]$, we find $n$ such that $t \in (\tau_{n-1}, \tau_n]$ and we notice that
\[
\tilde{u}^{\tau} (x,t) - u^{\tau} (x,t) = u_n(x) - u^{\tau} (x,t) = u^{\tau} (x, \tau_{n}) - u^{\tau} (x,t).
\]
By \eqref{exb},
\beq \label{ex14}
\begin{aligned}
\int_{\Omega} | \tilde{u}^{\tau} (x,t) - u^{\tau} (x,t) |^2 dx &= \int_{\Omega} |u^{\tau} (x,\tau_{n-1}) - u^{\tau} (x,t)|^2 dx \\
&\leq M_\e (t - \tau_{n-1}) \leq M_\e \tau \to 0,
\end{aligned}
\eeq
as $\tau \to 0^+$. This shows that $\tilde{u}^{\tau}(\cdot, t) - u^{\tau}(\cdot, t) \to 0$ in $L^2(\Omega)$ as $\tau \to 0^+$. Moreover, given $\phi \in L^2(\Omega \times (0,T))$, we have
\beq \label{ex15}
\begin{aligned}
\int_{\Omega_T} \tilde{u}^{\tau} (x,t) \phi(x,t) dx dt &= \int_{\Omega_T} (\tilde{u}^{\tau} (x,t) - u^{\tau} (x,t) )\phi(x,t) dxdt \\
&+ \int_{\Omega_T} u^{\tau} (x,t) \phi(x,t) dx dt.
\end{aligned}
\eeq
By H\"older's inequality and \eqref{ex14}, the first integral on the right-hand side of \eqref{ex15} converges to zero.  Using the fact that $u^{\tau} \weak u$ in $L^2((0,T); H^2(\Omega))$ in the second integral, we deduce that $\tilde{u}^{\tau} \weak u$ in $L^2((0,T); L^2(\Omega))$. \\
Moreover, by \eqref{ex12} and the fact that $\tilde{u}^{\tau} (x,t) = u^{\tau} (x,\tau_{n}  )$ for $t \in (\tau_{n-1}, \tau_n]$,
\beq \label{ex16}
\int_{\Omega} | \nabla \tilde{u}^{\tau} (x,t) |^2 dx \leq \frac{3M_\e}{\sigma}, \quad \int_{\Omega} | \nabla^2 \tilde{u}^{\tau} (x,t) |^2 dx \leq \frac{3M_\e}{\sigma},
\eeq
for all $\tau > 0$ and all $t \in (0,T)$. Hence, up to a subsequence, $\tilde{u}^{\tau} \weak u$ in $L^2((0,T); H^2(\Omega))$. Furthermore, by \eqref{ex5}, for every $w \in L^2((0,T); H^2(\Omega))$,
\[
\begin{aligned}
0 &= \int_{\Omega} \left( \frac{1}{\eps} W'(\tilde{u}^{\tau} (x,t)) w - 2\eps q \nabla \tilde{u}^{\tau} (x,t) \cdot \nabla w + 2\eps^3 \nabla^2 \tilde{u}^{\tau} (x,t) \cdot \nabla^2 w    \right) dx \\
&+ \int_{\Omega}u^{\tau}_t (x,t) w \ dx.
\end{aligned}
\]
Integrating in time over $(t_1,t_2)$ gives
\[
\begin{aligned}
0 &= \int_{t_1}^{t_2} \int_{\Omega} \left( \frac{1}{\eps} W'(\tilde{u}^{\tau} (x,t)) w - 2\eps q \nabla \tilde{u}^{\tau} (x,t) \cdot \nabla w + 2\eps^3 \nabla^2 \tilde{u}^{\tau} (x,t) \cdot \nabla^2 w    \right) dx dt \\
&+ \int_{t_1}^{t_2} \int_{\Omega} u^{\tau}_t w \ dxdt.
\end{aligned}
\]
We note that from \eqref{ex7} we have

\[
\begin{aligned}
\int_\Omega (u_n - u_0)^2 \ dx &= \int_\Omega (u_n - u_{n-1} + u_{n-1} - \ldots + u_1 - u_0)^2 \ dx \\
&\leq \ell \sum_{k=1}^{\ell} \int_{\Omega} (u_k - u_{k-1})^2 \ dx \leq \ell \tau M_\e = T M_\e
\end{aligned}
\]
where we have used the convexity of the function $z \mapsto z^2$ and the fact that $\tau = T/\ell$, and this implies
\beq \label{H2uniform1}
\int_\Omega | u_n | ^2 \ dx \leq C
\eeq
for some constant $C > 0$. Moreover, arguing as in \eqref{beforeEx7}, it follows that
\[
\int_\Omega \left( \frac{1}{\eps} W(u_{n}) - \e q  |\nabla u_{n}|^2 +  \e^3 |\nabla^2 u_{\ell}|^2 \right) dx  \leq \int_{\Omega} \left( \frac{1}{\eps} W(u_{0}) - \eps q |\nabla u_{0}|^2 + \eps^3  |\nabla^2u_{0}|^2 \right) dx \leq \frac{M_\e}{2}
\]
for all $n \in \{0, \ldots, \ell \}$, and in turn, by the interpolation inequality in Lemma \ref{interpN},
\beq \label{H2uniform2}
\int_\Omega  |\nabla u_{n}|^2 dx \leq C \quad \text{ and } \quad \int_\Omega  |\nabla^2 u_{n}|^2 dx \leq C
\eeq
for some constant $C > 0$ and for all $n \in \{0, \ldots, \ell \}$. Using \eqref{H2uniform1}, \eqref{H2uniform2} and the Sobolev embedding theorem, we have
\beq \label{H2uniform3}
||u_n||_{L^\infty (\Omega)} \leq C ||u_n||_{H^2 (\Omega)} \leq C,
\eeq
where $C > 0$ changes from side to side. By the Mean Value Theorem, \eqref{H2uniform3}, and the fact that $W$ is $C^2$, we deduce
\beq
\begin{aligned}
\int_\Omega (W'(\tilde{u}^\tau(x,t)) - W'(u(x,t))) w dx &\le \max_{-C^\star \leq \xi \leq C^\star} |W''(\xi)| \int_\Omega |\tilde{u}(x,t) - u(x,t)| |w| dx \\
&\leq C \int_\Omega |\tilde{u}(x,t) - u(x,t)| |w| dx.
\end{aligned}
\eeq
Letting $\tau \to 0^+$ and using the facts that $\tilde{u}^{\tau} \weak u$ in $L^2((0,T); H^2(\Omega))$, $u^{\tau} \weak u$ in $H^1((0,T); L^2(\Omega))$ we get
\beq \label{ex17}
\begin{aligned}
0 &= \int_{t_1}^{t_2} \int_{\Omega} \left( \frac{1}{\eps} W'(u(x,t)) w - 2\eps q \nabla u (x,t) \cdot \nabla w + 2\eps^3 \nabla^2 u (x,t) \cdot \nabla^2 w    \right) dx dt \\
&+ \int_{t_1}^{t_2} \int_{\Omega}u_t (x,t) w \ dxdt.
\end{aligned}
\eeq

In particular, let $\{ w_k \} \subset H^2(\Omega)$ be dense. 
Using the fact that $u(\cdot, t) \in H^2(\Omega)$ and $\frac{\partial u}{\partial t} \in L^2(\Omega)$ for $\mathcal{L}^1$ a.e. $t \in (0,T)$, by the arbitrariness of $t_1$ and $t_2$, we find that
\[
\begin{aligned}
0 &= \int_{\Omega} \left( \frac{1}{\eps} W'(u(x,t)) w_k - 2\eps q \nabla u (x,t) \cdot \nabla w_k + 2\eps^3 \nabla^2 u (x,t) \cdot \nabla^2 w_k    \right) dx \\
&+  \int_{\Omega}u_t  (x,t) w_k \ dx
\end{aligned}
\]
for $\mathcal{L}^1$ a.e. $t \in (0,T)$, where the measure--zero set depends on $k$. Since $\{ w_k \}$ is countable, we can find a set $E \subset (0,T)$ with $\mathcal{L}^1(E) = 0$ such that the previous equality holds for all $t \in (0,T) \setminus E$ and all $k$.

Since $u(\cdot, t) \in H^2(\Omega)$, then $u(\cdot, t) \in L^\infty(\Omega)$ and, again by Mean Value Theorem and the fact that $W$ is $C^2$, it follows that $W'(u(\cdot, t)) \in L^2(\Omega)$. This, together with the density of $\{ w_k \}$ in $H^2(\Omega)$, and the fact that $u_t  \in L^2(\Omega)$ for $t \in (0,T) \setminus E$, implies that
\beq \label{ex18}
\begin{aligned}
0 &= \int_{\Omega} \left( \frac{1}{\eps} W'(u(x,t)) w - 2\eps q \nabla u (x,t) \cdot \nabla w + 2\eps^3 \nabla^2 u (x,t) \cdot \nabla^2 w    \right) dx \\
&+  \int_{\Omega} u_t  (x,t) w \ dx
\end{aligned}
\eeq
for all $t \in (0,T) \setminus E$ and all $w \in H^2(\Omega)$.
Hence $u$ is a weak solution of equation \eqref{mainPDE} and
since $\Omega$ has finite measure, taking $w = 1$ leads to
\beq \label{ex19}
0 = \int_{\Omega} \frac{1}{\eps} W'(u(x,t)) dx +  \int_{\Omega} u_t  (x,t) \ dx,
\eeq
which implies
\[
\int_{\Omega} u(x,t) dx = \int_{\Omega} u_0(x) dx + \int_0^t \int_{\Omega}  \frac{1}{\eps} W'(u(x,s)) dx ds.
\]
\end{proof}

\section{Appendix}

\subsection{Smooth Linearization Near the Hyperbolic Fixed Point}

In the proof of Lemma \ref{LemmaODEclose} we use the fact that in a sufficiently small neighborhood of the fixed point $x_0$ of the system \eqref{odeMain}, $F$ admits a $C^1$ linearization.  This variant of the classical Hartman--Grobman Theorem is based on the concept of $Q$--smoothness of the Jacobian matrix $DF(x_0)$ introduced in \cite{Sell}.
Following \cite{Sell}, we define
\beq
\label{gammaDef}
\gamma(\la; m):= \la - \sum_{i=1}^4 m_ir_i, \text{ for } \la \in \mathbb{C}, \ m_i \in \mathbb{N}_0,
\eeq
where $r_i$ are the eigenvalues in \eqref{evalues}.
\begin{defin}
A matrix $A$ is said to satisfy the \emph{Sternberg condition of order $N$, $N \geq 2$}, if
\beq \label{sternberg}
\gamma(\la; m) \neq 0, \text{ for all } \la \in \Sigma(A), \text{and for all } m \text{ such that } 2 \leq |m| \leq N,
\eeq
where $|m| := \sum m_i$. We will say that $A$ satisfies the \emph{strong Sternberg condition of order $N$}, if $A$ satisfies \eqref{sternberg} and
\beq \label{strongsternberg}
\rp \gamma(\la; m) \neq 0,
\eeq
for all $\la \in \Sigma(A)$ and all $m$ such that $|m| = N$.
\end{defin}
\begin{defin}
Let $\Sigma^{+}(A)$ and $\Sigma^{-}(A)$ be the set of eigenvalues of $A$ having positive and negative real part respectively. $A$ is said to be \emph{strictly hyperbolic} if
\[
\Sigma^+(A) \neq \emptyset, \quad \Sigma^-(A) \neq \emptyset.
\]
The \emph{spectral spread} of $A$ is defined by
\[
\rho^j := \frac{ \max\{ |\rp \la|: \la \in \Sigma^j(A)  \}    }{  \min\{ |\rp \la|: \la \in \Sigma^j(A)  \}   },
\]
for $j = \pm$.
\end{defin}

\begin{defin} \label{qsmooth}
Let $Q \in \mathbb{N}$ and $A$ be hyperbolic. The \emph{$Q$--smoothness} of $A$ is the largest integer $K \geq 0$ such that
\begin{enumerate}[(i)]
\item $Q - K\rho^- \geq 0$, if $\Sigma^+(A) = \emptyset$;
\item $Q - K\rho^+ \geq 0$, if $\Sigma^-(A) = \emptyset$;
\item there exist $M,N \in \mathbb{N}$ with $Q = M + N$ and $M - K\rho^+ \geq 0$, $N - K\rho^- \geq 0$, when $A$ is strictly hyperbolic.
\end{enumerate}
\end{defin}

The following theorem is proved in \cite{Sell} (Theorem 1, page 4).
\begin{thm} \label{sell} 
Let $X$ be a finite dimensional Banach space.  Let $Q \geq 2$ be an integer. Assume $G$ is of class $C^{3Q}$ on $U \subset X$ with $0 \in U$, where $D^pG(0) = 0$ for $p=0,1$. Let $A$ be strictly hyperbolic and assume it satisfies the strong Sternberg condition of order Q. Then
\beq \label{ODE}
x' = Ax + G(x)
\eeq
admits a $C^K$--linearization, where $K$ is the $Q$--smoothness of $A$. In other words, there exists a $C^K$--diffeomorphism between solutions of \eqref{ODE} and solutions of its linear part.
\end{thm}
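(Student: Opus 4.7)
The plan is to prove the theorem by combining two classical strategies that together yield Sell's refinement of the Hartman--Grobman theorem. First, I would use the strong Sternberg condition of order $Q$ to perform a polynomial normal--form reduction that eliminates nonlinear terms of order $< Q$. Second, I would set up the conjugacy equation as a fixed--point problem on a suitable Banach space of $C^K$ mappings and solve it by a contraction argument; the $Q$--smoothness then controls exactly how much regularity the resulting conjugacy inherits.

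For the normal--form step, I would construct inductively, for $j = 2, 3, \ldots, Q-1$, near--identity polynomial transformations $x = y + P_j(y)$ with $P_j$ a vector--valued homogeneous polynomial of degree $j$. Each such change generates a homological equation $\mathcal{L}_A P_j = R_j$, where the Lie--derivative operator $\mathcal{L}_A P(y) := DP(y)\,Ay - A P(y)$ acts diagonally on the monomial basis $\{y^m e_\lambda\}$ of homogeneous polynomials with eigenvalues $\gamma(\lambda; m)$ defined in \eqref{gammaDef}. The Sternberg condition \eqref{sternberg} is precisely the non--resonance assumption that guarantees invertibility of $\mathcal{L}_A$ on each degree, so one can solve $P_j = \mathcal{L}_A^{-1} R_j$ and obtain, after finitely many steps, a reduced system $y' = Ay + \tilde G(y)$ with $\tilde G(y), D\tilde G(y), \ldots = O(|y|^Q)$ near $0$.

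For the conjugacy step, I would seek a $C^K$--diffeomorphism $h(y) = y + \psi(y)$ intertwining the linear flow $e^{tA}$ with the nonlinear flow $\varphi_t$ of the reduced system, fixing $0$. Using the hyperbolic splitting $X = X^+ \oplus X^-$ induced by $\Sigma^\pm(A)$, with projectors $\Pi^\pm$, I would write $\psi$ as a fixed point of the variation--of--constants integral
\[
(\mathcal{T}\psi)(y) = \int_{-\infty}^{0} e^{-sA}\Pi^+\tilde G\bigl(\varphi_s(y) + \psi(\varphi_s(y))\bigr)\,ds - \int_{0}^{\infty} e^{-sA}\Pi^-\tilde G\bigl(\varphi_s(y) + \psi(\varphi_s(y))\bigr)\,ds,
\]
or equivalently the analogous discrete functional equation for the time--$1$ map. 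The hyperbolicity of $A$ produces exponential contraction/expansion bounds $\|e^{sA}\Pi^\pm\| \le C e^{\mp \eta s}$ for appropriate $\eta > 0$, and the $Q$--th order vanishing of $\tilde G$ gives enough decay on the integrand near $y = 0$ to run a contraction in a weighted $C^K$ norm adapted to $\Pi^\pm$. Once $\psi$ is found, the implicit function theorem near $0$ promotes $h$ to a $C^K$--diffeomorphism of a small neighborhood.

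The main obstacle, and the reason the theorem is stated in terms of the $Q$--smoothness, is bookkeeping the precise regularity of the fixed point. Differentiating $\mathcal{T}\psi$ up to order $K$ produces integrands containing factors of $\|D^k \varphi_s\|$, which along the stable and unstable directions grow like $e^{k \rho^\pm \eta s}$ in the respective half--lines; balancing this growth against the decay $e^{-Q\eta|s|}$ coming from $\tilde G = O(|y|^Q)$ forces the arithmetic constraints $M - K\rho^+ \ge 0$, $N - K\rho^- \ge 0$ with $Q = M + N$ of Definition \ref{qsmooth}. Making these estimates uniform in a shrinking neighborhood, while preserving the contraction property of $\mathcal{T}$ up to the top derivative, is the delicate technical core of the argument and the reason $K$ cannot, in general, be improved beyond the $Q$--smoothness of $A$.
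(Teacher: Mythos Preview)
The paper does not prove this theorem. Immediately before the statement it says ``The following theorem is proved in \cite{Sell} (Theorem 1, page 4),'' and no argument is given; Theorem \ref{sell} is quoted as an external result and then applied (via Lemma \ref{sell2}) to the specific $4\times 4$ matrix arising from the Euler--Lagrange equation. So there is nothing in the paper to compare your proposal against.

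That said, your outline is a fair high--level sketch of the standard route to smooth linearization results of Sternberg--Sell type: a finite sequence of polynomial normal--form changes of variables to push the nonlinearity to order $Q$, followed by a fixed--point/contraction construction of the conjugacy using the hyperbolic splitting. Two points you should tighten if you actually intend to carry this out. First, you invoke only the Sternberg condition \eqref{sternberg} in the normal--form step and never explain where the \emph{strong} condition \eqref{strongsternberg} at $|m| = Q$ enters; in Sell's argument this real--part nonresonance at top order is what makes the relevant time integrals absolutely convergent and is not optional. Second, your heuristic ``balancing $e^{k\rho^\pm \eta s}$ against $e^{-Q\eta|s|}$'' is the right intuition for why the $Q$--smoothness $K$ appears, but turning it into an honest contraction on a $C^K$ space with the correct weighted norms---and showing the fixed point is actually $C^K$ rather than $C^{K-1,1}$---is the substantive work of \cite{Sell}, and your sketch stops well short of it.
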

In fact, as remarked in \cite{Sell}, in the case of $A$ strictly hyperbolic it suffices to assume that $G$ is of class $C^{Q + \max(M,N) + K}$. In the remainder, we show that under the assumptions of Lemma \ref{LemmaODEclose}, the matrix $DF(0)$ satisfies the strong Sternberg condition of order $N=2$ and the $2$-smoothness of $DF(0)$ is $K=1$. 

\begin{lemma} \label{sell2}
Consider the ordinary differential equation
\beq
\label{odesystem2}
x' = F(x),
\eeq
where $F$ is a $C^4$ mapping $\R^4 \rightarrow \R^4$ satisfying $F(0) = 0$.  Assume the linearization $DF(0)$ has four eigenvalues $\pm \gamma \pm \delta i$, where $\gamma \ge \lambda > 0$.  Then, the matrix $DF(0)$
 satisfies the strong Sternberg condition of order $N =2$. Moreover, the $Q$--smoothness of $DF(0)$ is $K = 1$, and \eqref{odeMain} admits a $C^1$--linearization around the hyperbolic fixed point $0$.
\end{lemma}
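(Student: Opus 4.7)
The plan is to verify the two algebraic conditions of Theorem \ref{sell} directly from the explicit spectrum of $DF(0)$, and then read off the $Q$-smoothness.

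First, I would set $A := DF(0)$ and record the spectrum from \eqref{evalues}, namely $\Sigma(A) = \{\gamma+\delta i,\, -\gamma-\delta i,\, \gamma-\delta i,\, -\gamma+\delta i\}$, so that the real parts are $\pm\gamma$ with $\gamma>0$. In particular $\Sigma^+(A) = \{\gamma\pm\delta i\}$ and $\Sigma^-(A) = \{-\gamma\pm\delta i\}$ are both nonempty, so $A$ is strictly hyperbolic; moreover the real parts within each half are equal in absolute value, giving
\[
\rho^+ = \rho^- = 1.
\]

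Next I would check the strong Sternberg condition of order $N=2$. For any multi-index $m=(m_1,m_2,m_3,m_4)\in\mathbb{N}_0^4$ with $|m|=2$, using \eqref{evalues},
\[
\mathrm{Re}\Bigl(\sum_{i=1}^4 m_i r_i\Bigr) = \gamma\bigl( (m_1+m_3) - (m_2+m_4) \bigr) \in \{-2\gamma,\, 0,\, 2\gamma\},
\]
since the signed sum $(m_1+m_3)-(m_2+m_4)$ takes values in $\{-2,0,2\}$ when $|m|=2$. For any $\lambda\in\Sigma(A)$ we have $\mathrm{Re}\,\lambda\in\{\pm\gamma\}$, so
\[
\mathrm{Re}\,\gamma(\lambda;m) = \mathrm{Re}\,\lambda - \mathrm{Re}\Bigl(\sum_i m_i r_i\Bigr) \in \{\pm\gamma,\, \pm 3\gamma\},
\]
which is nonzero because $\gamma>0$. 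This verifies \eqref{strongsternberg} (and a fortiori \eqref{sternberg}) for $N=2$.

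Finally, I would compute the $Q$-smoothness with $Q=2$. Since $A$ is strictly hyperbolic, by Definition \ref{qsmooth}(iii) we need the largest integer $K\geq 0$ admitting a splitting $Q=M+N$ with $M-K\rho^+\geq 0$ and $N-K\rho^-\geq 0$. With $\rho^\pm=1$ and $Q=2$, choosing $M=N=1$ gives $K=1$, and $K=2$ is impossible since $M,N\geq 2$ would force $M+N\geq 4$. Hence $K=1$. Writing $F(x) = Ax + G(x)$ with $G(x) := F(x) - Ax$, the assumption $F\in C^4$ together with the remark after Theorem \ref{sell} (which only requires $G\in C^{Q+\max(M,N)+K} = C^{2+1+1} = C^4$) and $G(0)=0$, $DG(0)=0$ allows us to invoke Theorem \ref{sell}, obtaining the desired $C^1$-diffeomorphism conjugating the flow of \eqref{odesystem2} with the flow of $y'=Ay$ on a neighborhood of $0$. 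Neither step involves a genuine obstacle; the only subtle point is bookkeeping the regularity in the remark of \cite{Sell} to confirm that $C^4$ for $F$ is sufficient.
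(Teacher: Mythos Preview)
Your proposal is correct and follows essentially the same route as the paper: verify strict hyperbolicity and $\rho^\pm=1$, check the strong Sternberg condition of order $2$ via the real parts, compute the $2$-smoothness $K=1$, and invoke Theorem~\ref{sell} with Sell's remark on the reduced regularity $C^{Q+\max(M,N)+K}=C^4$. The only cosmetic difference is that you list the possible values of $\mathrm{Re}\,\gamma(\lambda;m)$ directly and handle all $\lambda\in\Sigma(A)$ at once, whereas the paper argues by contradiction for one eigenvalue and then says the others are similar; both are the same parity observation that an odd integer cannot equal an even one.
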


\begin{proof}
We write \eqref{odesystem2} as
\beq \label{odesystemLin}
x' = DF(0)x + G,
\eeq
where $G(x) := F(x) - DF(0)x$ is of class $C^4$, $G(0) = F(0) = 0$, $DG(0) = DF(0) - DF(0) = 0$ and show that  \eqref{sternberg} and \eqref{strongsternberg} hold, for $N=2$. Recalling \eqref{gammaDef}, we have
\beq
\label{gm1}
\gamma(r_1; m) = (1-m_1)r_1 - m_2 r_2 - m_3 r_3 - m_4 r_4,
\eeq
where $|m| = \sum_{i=1}^4 m_i = 2$ and $r_1 := \gamma + \delta i, r_2 := \gamma - \delta i, r_3 := -\gamma + \delta i, r_4 := -\gamma - \delta i$ are the eigenvalues of $DF(0)$.  Assume, for the sake of contradiction, that $\rp \gamma(r_1; m) = 0$ with $|m| = 2$.
Setting the real part of \eqref{gm1} to $0$ and recalling $|m| = 2$, we have
\beq \label{zero}
\left\{
\begin{aligned}
1 - m_1 - m_2 + m_3 + m_4 &= 0, \\
m_1 + m_2 + m_3 + m_4 = 2,
\end{aligned}
\right.
\eeq
Adding the two equations and dividing by two, one has
\beq
m_3 + m_4 = 1/2,
\eeq
a contradiction since $m_3$ and $m_4$ are integers.
A similar argument for any $\la \in \Sigma(Df(0))$ shows that  \eqref{strongsternberg} and \eqref{sternberg} hold for the matrix $Df(0)$, and $N=2$.  

It remains to show that the $2$-smoothness of $DF(0)$ is $K=1$.
Since $|\rp \la| = \gamma$, for all $\la \in \Sigma(Df(0))$, then the spectral radius of $Df(0)$ is $\rho^i = 1$, for $i= \pm$.
Being $Df(0)$ is strictly hyperbolic, we are in case $(iii)$ of Definition \ref{qsmooth} and $Q = 2$ implies $M = N = 1$. In turn, the largest integer $K$ that satisfies
\beq
\left\{
\begin{aligned}
M - K\rho^+ = 1 - K &\geq 0, \\
N - K\rho^- = 1- K &\geq 0,
\end{aligned}
\right.
\eeq
is $K = 1$, which is then the $2$--smoothness of $Df(0)$.
We now apply Theorem \ref{sell} with $Q = 2$ and $A = DF(0)$ to conclude that \eqref{odesystemLin} admits a $C^1$--linearization.
\end{proof}

\section{Acknowledgments} 
The authors would like to thank Irene Fonseca and Giovanni Leoni for the fruitful discussions and preliminary readings of the manuscript. They would also like to thank the Center for Nonlinear Analysis at Carnegie Mellon University, where part of the research was developed. The results contained in this manuscript fulfill part of the Ph.D. requirements of the second author, whose research was partially supported by the awards DMS 0905778 and DMS 1412095.

\newpage
\bibliographystyle{abbrv}
\bibliography{FullBibFixed}

\end{document}